% ------------------------------------------------------------------------
% bjourdoc.tex for birkjour.cls*******************************************
% ------------------------------------------------------------------------
%%%%%%%%%%%%%%%%%%%%%%%%%%%%%%%%%%%%%%%%%%%%%%%%%%%%%%%%%%%%%%%%%%%%%%%%%%

\documentclass{birkjour}
%
%
% THEOREM Environments (Examples)-----------------------------------------
%
 \newtheorem{thm}{Theorem}[section]
 \newtheorem{cor}[thm]{Corollary}
 \newtheorem{lem}[thm]{Lemma}
 \newtheorem{prop}[thm]{Proposition}
 \theoremstyle{definition}
 
 \theoremstyle{remark}
 \newtheorem{rem}[thm]{Remark}
 
 \numberwithin{equation}{section}

\begin{document}

%-------------------------------------------------------------------------
% editorial commands: to be inserted by the editorial office
%
%\firstpage{1} \volume{228} \Copyrightyear{2004} \DOI{003-0001}
%
%
%\seriesextra{Just an add-on}
%\seriesextraline{This is the Concrete Title of this Book\br H.E. R and S.T.C. W, Eds.}
%
% for journals:
%
%\firstpage{1}
%\issuenumber{1}
%\Volumeandyear{1 (2004)}
%\Copyrightyear{2004}
%\DOI{003-xxxx-y}
%\Signet
%\commby{inhouse}
%\submitted{March 14, 2003}
%\received{March 16, 2000}
%\revised{June 1, 2000}
%\accepted{July 22, 2000}
%
%
%
%---------------------------------
%Insert here the title, affiliations and abstract:
%

\title[Certain results on almost contact pseudo-metric
manifolds]{Certain Results on Almost Contact Pseudo-Metric
Manifolds}

%----------Author 1
\author[Venkatesha]{Venkatesha}

\address{Department of Mathematics\br
Kuvempu University\br
Shankaraghatta - 577 451\br
Karnataka, INDIA.}
\email{vensmath@gmail.com}

%\thanks{Thanks to whom you wish.}

%----------Author 2
\author[D. M. Naik]{Devaraja Mallesha Naik}
\address{Department of Mathematics\br
    Kuvempu University\br
    Shankaraghatta - 577 451\br
    Karnataka, INDIA.}
\email{devarajamaths@gmail.com}

%----------Author 3
\author[M. M. Tripathi]{Mukut Mani Tripathi}
\address{Department of Mathematics\br
    Institute of Science\br
    Banaras Hindu University\br
    Varanasi 221005, INDIA.}
\email{mmtripathi66@yahoo.com}

%----------classification, keywords, date
\subjclass{53C15; 53C25; 53D10}

\keywords{almost contact pseudo-metric manifold, $\xi$-sectional
curvature, almost $CR$ structures, Bott partial connection.}

%\date{January 1, 2004}
%----------additions
%\dedicatory{To my boss}
%%% --------------------

\begin{abstract}
We study the geometry of almost contact pseudo-metric manifolds in
terms of tensor fields $h:=\frac{1}{2}\pounds _\xi \varphi$ and
$\ell := R(\cdot,\xi)\xi$, emphasizing analogies and differences
with respect to the contact metric case. Certain identities
involving $\xi$-sectional curvatures are obtained. We establish
necessary and sufficient condition for a nondegenerate almost $CR$
structure $(\mathcal{H}(M), J, \theta)$ corresponding to almost
contact pseudo-metric manifold $M$ to be $CR$ manifold. Finally, we
prove that a contact pseudo-metric manifold $(M,
\varphi,\xi,\eta,g)$ is Sasakian if and only if the
corresponding nondegenerate almost $CR$ structure $(\mathcal{H}(M),
J)$ is integrable and $J$ is parallel along $\xi$ with respect to
the Bott partial connection.
\end{abstract}

%----------------------------------------------------------------------
\maketitle
%----------------------------------------------------------------------
%\tableofcontents

\section{Introduction}
In $1969$, Takahashi \cite{TT} initiated the study of contact
structures associated with pseudo-Riemannian metrics. Afterwards, a
number of authors studied such structures mainly focusing on a
special case, namely Sasakian pseudo-metric manifolds. The case of
contact Lorentzian structures $(\eta, g)$, where $\eta$ is a contact
$1$-form and $g$ a Lorentzian metric associated to it, has a
particular relevance for physics and was considered in \cite{Dug}
and \cite{BejDug}. A systematic study of almost contact
pseudo-metric manifolds was undertaken by Calvaruso and Perrone
\cite{CalPer1} in 2010, introducing all the technical apparatus
which is needed for further investigations, and such manifolds have
been extensively studied under several points of view in
\cite{AP1,Calv,AP2,CalPer3,Per1,Per2,Per3,Wang,AalCar,CarrPer}, and
references cited therein.

The operators $h:=\frac{1}{2}\pounds_\xi \varphi$ and $\ell : =
R(\cdot,\xi)\xi$ play fundamental roles in the study of geometry of
contact pseudo-metric manifolds. For contact metric manifolds,
Sharma \cite{RS} obtained the following beautiful results (Theorem
1.1 in \cite{RS}):
\begin{itemize}
\item [(a)] a contact metric manifold is $K$-contact if and only if
$h$ is a Codazzi tensor;
\item [(b)] a contact metric manifold is $K$-contact if and only
if $\tau$, the tensor metrically equivalent to the strain tensor
$\pounds_\xi g$ of $M$ along $\xi$, is a Codazzi tensor;
\item [(c)]  the sectional curvatures of all plane
sections containing $\xi$ vanish if and only if the tensor $\ell$ is
parallel.
\end{itemize}
The proof of these results exploit, in an essential way, the fact
that in the contact Riemannian case, the self-adjoint operator $h$
vanishes if $h^2=0$. But in the contact pseudo-metric case the
condition $h^2=0$ does not necessarily imply that $h=0$ (see
\cite{Per2}). So the corresponding results fail for general contact
pseudo-metric structures.

Under these circumstances, becomes interesting to explore more the
geometry of contact pseudo-metric manifolds. The paper is organized
as follows. In section~\ref{S:02}, we give the basics of almost
contact pseudo-metric manifolds. In section~\ref{S:03}, we study
contact pseudo-metric manifold $M$ with $h$ satisfying Codazzi
condition and we prove that $M$ is Sasakian pseudo-metric manifold
if and only if the equation \eqref{E:02.4} is satisfied and $h$ is a
Codazzi tensor. In Section~\ref{S:04}, we investigate the Codazzi
condition for the operator $\tau$, and we obtain a necessary and
sufficient condition for $\tau$ to be a Codazzi tensor on contact
pseudo-metric manifold. Moreover, if $\tau$ is a Codazzi tensor,
then $h^2=0$ and the Ricci operator $Q$ satisfies $Q\xi=2\varepsilon
n\xi$, and we prove that $M$ is a Sasakian pseudo-metric manifold if
and only if the equation \eqref{E:02.4} is satisfied and $\tau$ is a
Codazzi tensor. In section~\ref{S:05}, we obtain certain identities
involving $\xi$-sectional curvatures of contact pseudo-metric
manifolds. It is proved that the parallelism of the tensor $\ell$
together with the condition $\nabla_\xi h=0$ on a contact
pseudo-metric manifold implies that all $\xi$-sectional curvatures
vanish. At the end, we investigate the nondegenerate almost $C\!R$
structure $(\mathcal{H}(M), J, \theta)$ corresponding to almost
contact pseudo-metric manifold $M$, and establish a necessary and
sufficient condition for an almost contact pseudo-metric manifold to
be a $C\!R$ manifold. Finally, we show that a contact pseudo-metric
manifold $(M, \varphi,\xi,\eta,g)$ is Sasakian pseudo-metric if and
only if the corresponding nondegenerate almost $C\!R$ structure
$(\mathcal{H}(M),J)$ is integrable and $J$ is parallel along $\xi$
with respect to the Bott partial connection.

%%% ---------------------------------------------------------------
%%% ---------------------------------------------------------------

\section{Preliminaries}\label{S:02}
In this section, we briefly recall some general definitions and
basic properties of almost contact pseudo-metric manifolds. For more
information and details, we recommend the reference \cite{CalPer1}.

A $(2n+1)$-dimensional smooth connected manifold $M$ is said to be
an \textit{almost contact manifold} if there exists on $M$ a $(1,1)$
tensor field $\varphi$, a vector field $\xi$, and a $1$-form $\eta$
such that
\begin{equation} \label{E:e1}
\varphi^2 = -I + \eta\otimes\xi, \quad \eta (\xi) = 1, \quad
\varphi\xi=0, \quad \eta\circ\varphi=0
\end{equation}
for all $X,Y\in TM$. It is known that the first relation along with
any one of the remaining three relations in \eqref{E:e1} imply the
remaining two relations. Also, for an almost contact structure, the
rank of $\varphi$ is $2n$. For more details, we refer to
\cite{BDE2}.

\par If an almost contact manifold is endowed with a
pseudo-Riemannian metric $g$ such that
\begin{equation}\label{E:e3}
g(\varphi X, \varphi Y)=g( X, Y)-\varepsilon\eta(X)\eta(Y),
\end{equation}
where $\varepsilon=\pm 1$, for all $X,Y\in TM$, then $(M, \varphi,
\xi, \eta, g)$ is called an \textit{almost contact pseudo-metric
manifold}. The relation \eqref{E:e3} is equivalent to
\begin{equation}\label{E:e3a}
\eta(X)=\varepsilon g(X, \xi)\; {\rm along} \; {\rm with}\;
g(\varphi X,Y)=-g(X,\varphi Y).
\end{equation}
In particular, in an almost contact pseudo-metric manifold, it
follows that $g(\xi, \xi)=\varepsilon$ and so, the characteristic
vector field $\xi$ is a unit vector field, which is either
space-like or time-like, but cannot be light-like.

\par The \textit{fundamental $2$-form} of an almost contact
pseudo-metric manifold $(M, \varphi, \xi, \eta, g)$ is defined by
\[
\Phi(X,Y)=g(X, \varphi Y),
\]
which satisfies $\eta \wedge \Phi^{n}\neq 0$. An almost contact
pseudo-metric manifold is said to be a \textit{contact pseudo-metric
manifold} if $d\eta=\Phi$, where
\[
d\eta(X, Y)=\frac{1}{2}(X\eta(Y)-Y\eta(X)-\eta([X, Y])).
\]
The curvature operator $R$ is given by
\[
R(X, Y)=[\nabla_X, \nabla_Y]-\nabla_{[X, Y]}.
\]
This sign convention of $R$ is opposite to the one used in
\cite{CalPer1,CalPer3,Per1,Per2,Per3}. The Ricci operator $Q$ is
determined by
\[
S(X, Y ) = g(QX, Y ).
\]
In an almost contact pseudo-metric manifold $(M, \varphi, \xi, \eta,
g)$ there always exists a special kind of local pseudo-orthonormal
basis $\{e_i, \varphi e_i, \xi\}_{i=1}^n$, called a local
$\varphi$-basis.

\par In a contact pseudo-metric manifold, the $(1,1)$ tensor
$h=\frac{1}{2}\pounds_\xi \varphi$ is self-adjoint and satisfies
\[
h\xi=0,\quad  \varphi h + h\varphi =0, \quad \text{tr}(h) =
\text{tr}(\varphi h)=0.
\]
Further, one has the following formulas:
\begin{gather}
\nabla_X \xi=-\varepsilon\varphi X-\varphi h X,\label{E:0:2.3}\\
(\pounds_\xi g)(X, Y)=2g(h\varphi X, Y),\label{E:0:2.4}\\
(\nabla_\xi h)X=\varphi X-h^2\varphi X+\varphi R(\xi, X)\xi,\label{E:0:2.5}\\
R(\xi, X)\xi-\varphi R(\xi,\varphi
X)\xi=2(h^2+\varphi^2)X,\label{E:0:2.6} \\ \text{tr }\nabla
\varphi=2n\xi.\label{E:0:2.7}
\end{gather}

\par A contact pseudo-metric manifold $M$ is said to be
a \textit{$K$-contact pseudo-metric manifold} if $\xi$ is a Killing
vector field (or equivalently, $h=0$), and is said to be a
\textit{Sasakian pseudo-metric manifold} if the almost complex
structure $J$ on the product manifold $M\times {\Bbb R}$ defined by
\[
J\left(X, f\frac{d}{dt}\right)=\left(\varphi X - f\xi,
\eta(X)\frac{d}{dt}\right),
\]
is integrable, where $X\in TM$, $t$ is the coordinate on ${\Bbb R}$
and $f$ is a $C^\infty$ function on $M\times {\Bbb R}$. It is well
known that a contact pseudo-metric manifold $M$ is a Sasakian
pseudo-metric manifold if and only if
\begin{equation}\label{E:00:2.8}
(\nabla_X\varphi)Y=g(X,Y)\xi - \varepsilon\eta(Y)X
\end{equation}
for all $X, Y\in TM$. A Sasakian pseudo-metric manifold is always
$K$-contact pseudo-metric. A 3-dimensional $K$-contact pseudo-metric
manifold becomes a Sasakian pseudo-metric manifold, which may not be
true in higher dimensions. Further on a Sasakian pseudo-metric
manifold we have
\begin{equation}\label{E:02.4}
R(X,Y)\xi=\eta(Y)X-\eta(X)Y.
\end{equation}
In contact metric case, the condition \eqref{E:02.4} implies that
the manifold is Sasakian, which is not true in contact pseudo-metric
case \cite{Per1}. However, we have the following:
\begin{lem}\label{L:0:2.1} \cite{Per1}
Let $M$ be a $K$-contact pseudo-metric manifold. Then $M$ is a
Sasakian pseudo-metric manifold if and only if the curvature tensor
$R$ satisfies \eqref{E:02.4}.
\end{lem}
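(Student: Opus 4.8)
The plan is to establish only the nontrivial implication, that a $K$-contact pseudo-metric manifold satisfying \eqref{E:02.4} is Sasakian pseudo-metric; the converse is already recorded in the preliminaries, since a Sasakian pseudo-metric manifold is $K$-contact and satisfies \eqref{E:02.4}. By the characterization \eqref{E:00:2.8}, it suffices to prove that $(\nabla_X\varphi)Y = g(X,Y)\xi - \varepsilon\eta(Y)X$. Throughout I would use that $\nabla_X\varphi$ is skew-adjoint, namely $g((\nabla_X\varphi)Y,Z) = -g((\nabla_X\varphi)Z,Y)$, which follows at once by differentiating the identity $g(\varphi Y,Z) = -g(Y,\varphi Z)$ with the metric connection.

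First I would exploit the $K$-contact hypothesis. Since $h=0$, formula \eqref{E:0:2.3} collapses to $\nabla_X\xi = -\varepsilon\varphi X$. Substituting this into $R(X,Y)\xi = \nabla_X\nabla_Y\xi - \nabla_Y\nabla_X\xi - \nabla_{[X,Y]}\xi$ and using that $\nabla$ is torsion-free (so the terms $\varphi(\nabla_XY - \nabla_YX - [X,Y])$ cancel), a short computation yields
\[
R(X,Y)\xi = -\varepsilon\big[(\nabla_X\varphi)Y - (\nabla_Y\varphi)X\big].
\]
Comparing this with the hypothesis \eqref{E:02.4} gives the \emph{antisymmetric} identity
\[
(\nabla_X\varphi)Y - (\nabla_Y\varphi)X = \varepsilon\big[\eta(X)Y - \eta(Y)X\big],
\]
which I will call relation $(\ast)$.

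The second ingredient comes from the contact condition itself. Since $M$ is contact pseudo-metric, $\Phi = d\eta$, so $d\Phi = 0$. Using $(\nabla_X\Phi)(Y,Z) = g(Y,(\nabla_X\varphi)Z)$ and the skew-adjointness above, the vanishing of the cyclic sum $d\Phi(X,Y,Z)=0$ transcribes into the \emph{cyclic} identity
\[
g((\nabla_X\varphi)Y,Z) + g((\nabla_Y\varphi)Z,X) + g((\nabla_Z\varphi)X,Y) = 0,
\]
which I will call relation $(\ast\ast)$. At this stage I would introduce $T(X,Y,Z) := g((\nabla_X\varphi)Y,Z)$, noting that $T$ is skew in $(Y,Z)$ by skew-adjointness, antisymmetric in $(X,Y)$ up to the explicit term in $(\ast)$, and cyclically annihilated by $(\ast\ast)$.

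The decisive step is then a purely algebraic Koszul-type manipulation of these three relations: using $(\ast\ast)$ to isolate $T(X,Y,Z)$, rewriting one of the remaining terms by means of $(\ast)$ together with the skew-symmetry in the last two slots, the cross terms cancel and one is left with
\[
g((\nabla_X\varphi)Y,Z) = \varepsilon\big[\eta(Z)g(X,Y) - \eta(Y)g(X,Z)\big].
\]
Since $\varepsilon\eta(Z) = g(\xi,Z)$, the right-hand side equals $g\big(g(X,Y)\xi - \varepsilon\eta(Y)X,\, Z\big)$, and as $Z$ is arbitrary this is exactly the Sasakian identity \eqref{E:00:2.8}, completing the proof. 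I expect the main (though still elementary) obstacle to lie in this final algebraic step: one must keep the three distinct symmetry types of $T$ straight under relabeling and track the factors of $\varepsilon$ with care, since in the pseudo-metric setting $\varepsilon = \pm 1$ and $g(\xi,\xi)=\varepsilon$ intervene at each conversion between $\eta$ and $g(\cdot,\xi)$.
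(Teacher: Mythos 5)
Your proposal is correct, but note that the paper itself gives \emph{no} proof of Lemma~\ref{L:0:2.1}: it is quoted verbatim from \cite{Per1}, so there is no internal argument to compare against. What you have done is supply a self-contained proof of the cited result, and it checks out. The two ingredients are sound: with $h=0$, \eqref{E:0:2.3} gives $\nabla\xi=-\varepsilon\varphi$, and the curvature computation together with \eqref{E:02.4} yields your relation $(\ast)$; the contact condition gives $d\Phi=d^2\eta=0$, which, via $(\nabla_X\Phi)(Y,Z)=g(Y,(\nabla_X\varphi)Z)$ and skew-adjointness of $\nabla_X\varphi$, yields the cyclic relation $(\ast\ast)$. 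The final symmetry-juggling step, which you flagged as the delicate point, does close up: writing $T(X,Y,Z)=g((\nabla_X\varphi)Y,Z)$ and $A(X,Y,Z)=\varepsilon\{\eta(X)g(Y,Z)-\eta(Y)g(X,Z)\}$, relation $(\ast\ast)$ plus skew-symmetry in the last two slots gives $T(X,Y,Z)=T(Y,X,Z)+T(Z,Y,X)$, and $(\ast)$ applied to the pair $(Z,Y)$ gives $T(Z,Y,X)=-T(Y,X,Z)+A(Z,Y,X)$, whence $T(X,Y,Z)=A(Z,Y,X)=\varepsilon\{\eta(Z)g(X,Y)-\eta(Y)g(X,Z)\}$; since $\varepsilon\eta(Z)=g(\xi,Z)$ by \eqref{E:e3a}, this is exactly \eqref{E:00:2.8}, so $M$ is Sasakian. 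The trade-off is clear: the paper buys brevity by outsourcing the lemma to Perrone, while your argument makes the statement independent of that reference, using only the structural identities already recorded in Section~\ref{S:02}; it is also essentially the natural proof one would expect, a Koszul-type manipulation of the three symmetries of $\nabla\varphi$ forced by the $K$-contact, curvature, and contact ($d\Phi=0$) conditions.
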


%%% --------------------------------------------------------------
%%% --------------------------------------------------------------

\section{The Codazzi condition for $h$}\label{S:03}
A self-adjoint tensor $A$ of type $(1,1)$ on a pseudo-Riemannian
manifold is known to be a Codazzi tensor if
\begin{equation}
(\nabla_X A)Y=(\nabla_Y A)X
\end{equation}
for all $X, Y\in TM$. Now, we prove the following:

\begin{thm}\label{T:3.1}
Let $M$ be a  contact pseudo-metric manifold. Then the following
statements are true:
\begin{itemize}
\item [(a)] If $h$ is a Codazzi tensor, then $h^2=0$.
\item [(b)] $M$ is a Sasakian pseudo-metric manifold if and only if
$M$ satisfies \eqref{E:02.4} and $h$ is a Codazzi tensor.
\end{itemize}
\end{thm}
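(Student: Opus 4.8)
The plan is to prove the two parts in order, using the structural equations listed in the preliminaries.

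For part (a), the plan is to start from the Codazzi condition $(\nabla_X h)Y = (\nabla_Y h)X$ and specialize $Y = \xi$. Since $h\xi = 0$, I can write $(\nabla_X h)\xi = \nabla_X(h\xi) - h\nabla_X\xi = -h\nabla_X\xi$, and using \eqref{E:0:2.3} for $\nabla_X\xi = -\varepsilon\varphi X - \varphi h X$ together with $\varphi h + h\varphi = 0$, this becomes $(\nabla_X h)\xi = \varepsilon h\varphi X + h\varphi h X = \varepsilon h\varphi X - h^2\varphi X$. On the other side, $(\nabla_\xi h)X$ is given explicitly by \eqref{E:0:2.5}. Setting the two expressions equal for all $X$ gives a relation among $h$, $h^2$, $\varphi$, and the curvature term $\varphi R(\xi,X)\xi$. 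The next step is to manipulate this into a statement purely about $h^2$; I expect to apply $\varphi$ on the left (or compose suitably) and invoke \eqref{E:0:2.6} to eliminate the curvature term, and use $\varphi^2 = -I + \eta\otimes\xi$ to simplify. The main obstacle in part (a) will be the bookkeeping needed to isolate $h^2$: one must combine the Codazzi-derived identity with \eqref{E:0:2.5} and \eqref{E:0:2.6} carefully, possibly also taking the trace or using self-adjointness of $h$ (so that $h^2 = 0$ follows rather than merely $h^2\varphi = 0$ on a subspace). Crucially, unlike the Riemannian case, I must \emph{not} conclude $h = 0$ from $h^2 = 0$; the conclusion is exactly $h^2 = 0$.

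For part (b), one direction is immediate: if $M$ is Sasakian pseudo-metric, then $h = 0$ (Sasakian implies $K$-contact), so $h$ is trivially a Codazzi tensor, and \eqref{E:02.4} holds on any Sasakian pseudo-metric manifold as recorded in the preliminaries. For the converse, assume \eqref{E:02.4} and that $h$ is Codazzi. By part (a), $h^2 = 0$. The strategy is to show $h = 0$, which upgrades $M$ to $K$-contact pseudo-metric; then Lemma~\ref{L:0:2.1} together with the assumed \eqref{E:02.4} yields Sasakian. To get $h = 0$, I plan to feed the curvature hypothesis \eqref{E:02.4} into \eqref{E:0:2.5} and \eqref{E:0:2.6}: from \eqref{E:02.4} one computes $R(\xi,X)\xi = X - \eta(X)\xi = -\varphi^2 X$, so the curvature term simplifies drastically, and \eqref{E:0:2.5} reduces to $(\nabla_\xi h)X = \varphi X - h^2\varphi X - X + \eta(X)\xi$-type expression, while \eqref{E:0:2.6} collapses to force $h^2 + \varphi^2 = \varphi^2$, i.e.\ $h^2 = 0$ again consistently. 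Combining the simplified $(\nabla_\xi h)$ with the Codazzi identity $(\nabla_\xi h)X = (\nabla_X h)\xi = \varepsilon h\varphi X - h^2\varphi X = \varepsilon h\varphi X$ (using $h^2 = 0$) should give $\varepsilon h\varphi X = \varphi X - \varphi^3 X$-type relation that forces $h\varphi = 0$, hence $h = 0$ on the contact distribution, and since $h\xi = 0$ already, $h = 0$ everywhere.

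The step I expect to be the genuine obstacle is the final elimination yielding $h = 0$ in part (b): because $h^2 = 0$ does not imply $h = 0$ in the pseudo-metric setting, I cannot shortcut, and I must produce an independent linear relation forcing $h$ itself to vanish. The leverage must come from substituting the Sasakian-type curvature \eqref{E:02.4} into the evolution equation \eqref{E:0:2.5} and matching it against the Codazzi consequence computed in part (a); the delicate point is ensuring the curvature simplification is used consistently with my sign convention for $R$ (the paper flags that its $R$ is opposite to \cite{CalPer1}). Once $h = 0$ is established, the invocation of Lemma~\ref{L:0:2.1} is immediate and completes the proof.
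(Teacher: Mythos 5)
Your part~(a) is correct and is essentially the paper's own proof: specialize the Codazzi condition at $Y=\xi$, use \eqref{E:0:2.3} to get $(\nabla_\xi h)X=-\varepsilon\varphi hX-h^{2}\varphi X$, equate with \eqref{E:0:2.5} to obtain $\varphi R(\xi,X)\xi=-\varepsilon\varphi hX-\varphi X$, apply $\varphi$ to get $R(\xi,X)\xi=\varphi^{2}X-\varepsilon hX$ (the paper's \eqref{E:0:3.3}), and substitute into \eqref{E:0:2.6}. One small remark: this substitution yields $2\varphi^{2}X=2(h^{2}+\varphi^{2})X$ for all $X$, so $h^{2}=0$ falls out immediately; your hedge about needing a trace or self-adjointness to upgrade from ``$h^{2}\varphi=0$ on a subspace'' is unnecessary.

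Part~(b), however, contains a genuine sign error, located exactly at the point you yourself flagged as delicate. With the paper's conventions, \eqref{E:02.4} gives $R(\xi,X)\xi=\eta(X)\xi-\eta(\xi)X=\varphi^{2}X$ (the paper's \eqref{E:0:3.4}), \emph{not} $X-\eta(X)\xi=-\varphi^{2}X$ as you wrote. With the correct sign, $\varphi R(\xi,X)\xi=\varphi^{3}X=-\varphi X$, so \eqref{E:0:2.5} together with $h^{2}=0$ gives $(\nabla_\xi h)X=\varphi X-\varphi X=0$; equating this with the Codazzi consequence $(\nabla_\xi h)X=(\nabla_X h)\xi=\varepsilon h\varphi X$ yields $h\varphi=0$, hence $h=0$ (since $h=-h\varphi^{2}$ and $h\xi=0$), and Lemma~\ref{L:0:2.1} finishes the proof. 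Your sign instead produces $\varepsilon h\varphi X=\varphi X-\varphi^{3}X=2\varphi X$, and your claim that this ``forces $h\varphi=0$'' is a non sequitur: it would force $h=-2\varepsilon\varphi^{2}$, whose square is $4(I-\eta\otimes\xi)\neq 0$, contradicting part~(a). So the final elimination fails as written; it is the sign, not any extra idea, that rescues it. Note also that the paper avoids this entire recomputation: since part~(a) already established \eqref{E:0:3.3}, i.e.\ $R(\xi,X)\xi=\varphi^{2}X-\varepsilon hX$, comparing it directly with \eqref{E:0:3.4} reads off $\varepsilon hX=0$ at once, which is the cleaner route once the sign is right.
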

\begin{proof}
(a). Suppose that $h$ is a Codazzi tensor, that is,
\[
(\nabla_X h)Y=(\nabla_Yh)X, \qquad X,Y \in TM.
\]
For $Y=\xi$, using \eqref{E:0:2.3} in the above equation, we obtain
\begin{equation*}
(\nabla_\xi h)X=-\varepsilon \varphi h X- h^2 \varphi X.
\end{equation*}
In view of \eqref{E:0:2.5}, the above equation turns into
\begin{equation}\label{E:0:3.2}
\varphi R(\xi, X)\xi=-\varepsilon \varphi h X- \varphi X.
\end{equation}
Operating $\varphi$ on both sides of \eqref{E:0:3.2}, it follows
that
\begin{equation}\label{E:0:3.3}
R(\xi, X)\xi=\varphi^2X-\varepsilon h X.
\end{equation}
Making use of \eqref{E:0:3.3} in \eqref{E:0:2.6}, shows that $h^2=0$.

\smallskip

(b). If $M$ is a Sasakian pseudo-metric manifold, then $h=0$ and $M$
satisfies \eqref{E:02.4}; and the result is trivial. Conversely,
suppose that \eqref{E:02.4} is true and $h$ is a Codazzi tensor.
From \eqref{E:02.4}, we obtain that
\begin{equation}\label{E:0:3.4}
R(\xi, X)\xi=\varphi^2 X, \qquad X \in TM.
\end{equation}
Equations \eqref{E:0:3.3} and \eqref{E:0:3.4} imply that $h=0$, that
is, $M$ is a $K$-contact pseudo-metric manifold. Thus, the result
follows from Lemma~\ref{L:0:2.1}.
\end{proof}

\begin{rem}
In a contact Riemannian manifold, if $h$ is a Codazzi tensor, then
$h=0$, that is, the manifold becomes $K$-contact manifold \cite{RS}.
In the Riemannian case, as $h^2=0$ implies $h=0$,
Theorem~\ref{T:3.1}~(a) holds in a stronger form, that is, $M$ is
$K$-contact if and only if $h$ is a Codazzi tensor. But, in the case
of $M$ being contact pseudo-metric, the condition $h^2=0$ does not
imply that $h=0$, because $h$ may not be diagonalizable (see
\cite{Per2}). Note that the result (b) of
Theorem~\ref{T:3.1} is stronger than the Lemma~\ref{L:0:2.1} which
was proved in \cite{Per1}.
\end{rem}

In a contact Lorentzian manifold, just like the case of contact
metric manifold, the condition $h^2=0$ implies $h=0$ (see
\cite{Calv}). Hence, we immediately have the following

\begin{cor}
Let $M$ be a contact Lorentzian manifold. If $h$ is a Codazzi
tensor, then $h=0$, that is, $M$ is $K$-contact Lorentzian manifold.
\end{cor}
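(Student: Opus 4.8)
The plan is to read this off directly from Theorem~\ref{T:3.1}(a) combined with the one feature that distinguishes the Lorentzian case from the general pseudo-metric case. A contact Lorentzian manifold is in particular a contact pseudo-metric manifold, so Theorem~\ref{T:3.1}(a) applies verbatim: the hypothesis that $h$ is a Codazzi tensor immediately yields $h^2=0$. No new computation is needed for this half; everything in the proof of part (a) — reducing the Codazzi identity at $Y=\xi$ via \eqref{E:0:2.3}, feeding it through \eqref{E:0:2.5} to get \eqref{E:0:3.3}, and then comparing with \eqref{E:0:2.6} — goes through unchanged since those formulas are valid on any contact pseudo-metric manifold.

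The second and decisive step is to upgrade $h^2=0$ to $h=0$. This is exactly where the Lorentzian signature is used, and it is the analytic heart of the statement. In a general contact pseudo-metric manifold the self-adjoint operator $h$ need not be diagonalizable, so $h^2=0$ does not force $h=0$ (this is the obstruction flagged for the Per\-rone examples in \cite{Per2}). In the Lorentzian case, however, $h$ anticommutes with $\varphi$, kills $\xi$, and hence acts as a self-adjoint endomorphism of the contact distribution $\ker\eta$; on that $2n$-dimensional distribution the relevant algebraic structure guarantees diagonalizability, so a self-adjoint operator with $h^2=0$ must already vanish. I would invoke this implication as established in \cite{Calv} rather than re-proving it, since it is precisely the cited input that the Corollary is built upon.

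Putting the two steps together gives $h=0$, and by the definition recalled in Section~\ref{S:02} this means $\xi$ is a Killing vector field, i.e.\ $M$ is a $K$-contact Lorentzian manifold, which is the assertion. The main obstacle, such as it is, lies entirely in the second ingredient: the implication $h^2=0\Rightarrow h=0$ is false in the broader pseudo-metric setting and is what the Lorentzian hypothesis is there to supply. Because that implication is already available from \cite{Calv}, the Corollary follows at once from Theorem~\ref{T:3.1}(a), and I would present the proof as a two-line chaining of the two cited facts rather than developing either ingredient from scratch.
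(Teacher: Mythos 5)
Your proposal is correct and matches the paper's own reasoning exactly: the corollary is stated in the paper as an immediate consequence of Theorem~\ref{T:3.1}(a) together with the fact, cited from \cite{Calv}, that in the contact Lorentzian case $h^2=0$ forces $h=0$. The paper gives no further proof, so your two-step chaining (Codazzi $\Rightarrow h^2=0$ by Theorem~\ref{T:3.1}(a), then $h^2=0\Rightarrow h=0$ by \cite{Calv}, hence $K$-contact) is precisely the intended argument.
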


%%% ---------------------------------------------------------------
%%% ---------------------------------------------------------------
\section{The Codazzi condition for $\tau$}\label{S:04}

We denote by $\tau$, the tensor metrically equivalent to the strain
tensor $\pounds_\xi g$ along $\xi$, that is,
\[
g(\tau X, Y)=(\pounds_\xi g)(X,Y)
\]
for all $X, Y\in TM$. As pointed out in the introduction, in a
contact metric manifold, if $\tau$ satisfies the Codazzi condition,
then $h=0$, that is, the manifold is a $K$-contact manifold. This
fact need not be true in the case of contact pseudo-metric
manifolds. So, it is quite interesting to study contact
pseudo-metric manifolds, which satisfy the Codazzi condition for
$\tau$. Now we prove the following:
\begin{lem}
In a contact pseudo-metric manifold, $\tau$ is a Codazzi tensor if
and only if the curvature tensor $R$ satisfies
\begin{equation}\label{E:0:4.1}
R(\xi, X)Y=\varepsilon (\nabla_X \varphi)Y.
\end{equation}
\end{lem}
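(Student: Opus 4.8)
The plan is to reduce both conditions to transposition symmetries of a single $(0,3)$-tensor built from the Lie derivative of the Levi-Civita connection along $\xi$. First I would record the algebraic consequences of \eqref{E:0:2.3} and \eqref{E:0:2.4}. Since $(\pounds_\xi g)(X,Y)=2g(h\varphi X,Y)$, the operator $\tau$ determined by $g(\tau X,Y)=(\pounds_\xi g)(X,Y)$ equals $2h\varphi$, and the relations $h^{*}=h$, $\varphi^{*}=-\varphi$, $\varphi h=-h\varphi$ show that $\tau$, hence each $\nabla_X\tau$, is self-adjoint; in particular $g((\nabla_X\tau)Y,Z)$ is symmetric in $Y,Z$. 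Writing $AX:=\nabla_X\xi$, formula \eqref{E:0:2.3} gives $A=\frac{1}{2}\tau-\varepsilon\varphi$, so that $(\nabla_X A)Y=\frac{1}{2}(\nabla_X\tau)Y-\varepsilon(\nabla_X\varphi)Y$.

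Next I would invoke the standard identity for the Lie derivative of a torsion-free connection: $L:=\pounds_\xi\nabla$ is a symmetric $(1,2)$-tensor and, in the sign convention $R(X,Y)=[\nabla_X,\nabla_Y]-\nabla_{[X,Y]}$ used here, $L(X,Y)=R(\xi,X)Y+(\nabla_X A)Y$. Put $M(X,Y,Z):=g(L(X,Y),Z)$; by symmetry of $L$ this tensor is symmetric in its first two arguments. Substituting the expression for $(\nabla_X A)Y$ and writing $P(X,Y):=R(\xi,X)Y-\varepsilon(\nabla_X\varphi)Y$ (so that \eqref{E:0:4.1} is the statement $P\equiv 0$), we obtain $g(P(X,Y),Z)=M(X,Y,Z)-\frac{1}{2}g((\nabla_X\tau)Y,Z)$.

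The technical heart is to express $g((\nabla_X\tau)Y,Z)$ through $M$ as well. Differentiating the identity $(\pounds_\xi g)(Y,Z)=g(AY,Z)+g(AZ,Y)$ in the direction $X$ and substituting $(\nabla_X A)Y=L(X,Y)-R(\xi,X)Y$ gives $g((\nabla_X\tau)Y,Z)=M(X,Y,Z)+M(X,Z,Y)-\big(g(R(\xi,X)Y,Z)+g(R(\xi,X)Z,Y)\big)$, and the bracketed curvature terms cancel by the pair symmetry and skew-symmetry of $R$, namely $g(R(\xi,X)Y,Z)=g(R(Y,Z)\xi,X)=-g(R(\xi,X)Z,Y)$. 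Hence $g((\nabla_X\tau)Y,Z)=M(X,Y,Z)+M(X,Z,Y)$, and combining with the previous paragraph $g(P(X,Y),Z)=\frac{1}{2}\big(M(X,Y,Z)-M(X,Z,Y)\big)$. Thus \eqref{E:0:4.1} (i.e. $P\equiv 0$) is exactly the symmetry of $M$ in its last two arguments, while the Codazzi condition $(\nabla_X\tau)Y=(\nabla_Y\tau)X$, after cancelling the common term using the first-two-slot symmetry of $M$, is exactly the symmetry of $M$ in its first and third arguments.

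I would finish with the observation that any two distinct transpositions generate $S_3$. Since $M$ is always symmetric in slots $(1,2)$, imposing symmetry in slots $(1,3)$ (the Codazzi condition) forces total symmetry of $M$, in particular symmetry in slots $(2,3)$, which is \eqref{E:0:4.1}; conversely, imposing symmetry in slots $(2,3)$ (equation \eqref{E:0:4.1}) forces total symmetry, in particular symmetry in slots $(1,3)$, which is the Codazzi condition. This settles both implications simultaneously. The step demanding the most care is the curvature cancellation above, where the pair symmetry of $R$ and the chosen sign convention must be tracked precisely; the remainder is bookkeeping with \eqref{E:0:2.3}, \eqref{E:0:2.4} and the self-adjointness relations.
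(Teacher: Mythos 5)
Your proposal is correct, and it takes a genuinely different route from the paper's proof. The paper works with $R(\cdot,\cdot)\xi$ rather than $R(\xi,\cdot)\cdot$: starting from $R(X,Y)\xi=(\nabla_X\nabla\xi)Y-(\nabla_Y\nabla\xi)X$ and expanding $\nabla\xi$ via \eqref{E:0:2.3}, it shows that $\tau$ is Codazzi exactly when $R(X,Y)\xi=\varepsilon\{(\nabla_Y\varphi)X-(\nabla_X\varphi)Y\}$, and then converts between this condition and \eqref{E:0:4.1} by explicit manipulations with the first Bianchi identity and the curvature symmetries, carried out separately in each direction. You instead encode both conditions as transposition symmetries of the single trilinear form $M(X,Y,Z)=g\big((\pounds_\xi\nabla)(X,Y),Z\big)$: the built-in symmetry of $\pounds_\xi\nabla$ gives $(1,2)$-symmetry, the Codazzi condition becomes $(1,3)$-symmetry, \eqref{E:0:4.1} becomes $(2,3)$-symmetry, and the equivalence drops out of the fact that two distinct transpositions generate $S_3$. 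The two arguments rest on essentially the same underlying facts: the symmetry of $L=\pounds_\xi\nabla$ combined with your identity $L(X,Y)=R(\xi,X)Y+(\nabla_XA)Y$ is precisely the first Bianchi identity for the triple $(\xi,X,Y)$ in disguise, since antisymmetrizing in $X,Y$ yields $R(\xi,X)Y+R(Y,\xi)X+R(X,Y)\xi=0$. What your packaging buys is that both implications follow in one stroke and the mechanism of the equivalence is structurally transparent; the cost is invoking the (standard, but not proved in the paper) formula for the Lie derivative of a torsion-free connection. The paper's route is longer and handles each direction by hand, but uses only textbook curvature identities. You were also right to flag the sign sensitivity: your formula for $L$ is the one valid for the convention $R(X,Y)=[\nabla_X,\nabla_Y]-\nabla_{[X,Y]}$, which is the paper's choice, and the cancellation $g(R(\xi,X)Y,Z)+g(R(\xi,X)Z,Y)=0$ you rely on is simply skew-symmetry of the curvature tensor in its last two slots, valid for any pseudo-Riemannian metric.
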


\begin{proof}
Treating $\nabla \xi$ as a tensor of type $(1,1)$, that is
$\nabla\xi : X \mapsto \nabla_X \xi$, one can see that
\begin{equation*}
R(X, Y)\xi=(\nabla_X\nabla\xi)Y-(\nabla_Y\nabla\xi)X,
\end{equation*}
which together with \eqref{E:0:2.3} gives
\begin{equation}\label{E:0:4.2}
R(X,Y)\xi=-\varepsilon(\nabla_X\varphi)Y-(\nabla_X\varphi
h)Y+\varepsilon(\nabla_Y\varphi)X+(\nabla_Y\varphi h)Y.
\end{equation}
On the other hand, if $\tau$ is a Codazzi tensor, then from
\eqref{E:0:2.4} we have
\begin{equation*}
(\nabla_X h\varphi)Y=(\nabla_Y h\varphi)X.
\end{equation*}
Thus, \eqref{E:0:4.2} shows that $\tau$ is a Codazzi tensor if and
only if
\begin{equation}\label{E:0:4.3}
R(X,Y)\xi=\varepsilon\{(\nabla_Y\varphi)X-(\nabla_X\varphi)Y\}.
\end{equation}
Now if $\tau$ is a Codazzi tensor, then by using Bianchi identity
and \eqref{E:0:4.3}, we get
\begin{align*}
R(\xi, X, Y,Z)&=\varepsilon\{g(X, (\nabla_X\varphi)Y)-g(Y,
(\nabla_X\varphi)Z)+g(Z, (\nabla_X\varphi)Y)\\
&\quad-g(X, (\nabla_Z\varphi)Y)\}\\
&=-2\varepsilon g((\nabla_X\varphi)Z, Y)+R(Z,Y,\xi, X),
\end{align*}
and so
\begin{equation*}
R(\xi,X,Y,Z)=-\varepsilon g((\nabla_X\varphi)Z, Y),
\end{equation*}
which gives \eqref{E:0:4.1}.

Conversely, if \eqref{E:0:4.1} is true, then from Bianchi identity
we have
\begin{align*}
R(X, Y, \xi, Z)&=R(\xi, Z, X, Y)=-R(Z,X,\xi,Y)-R(X, \xi, Z, Y)\\
&=-R(\xi, Y, Z, X)+R(\xi, X, Z, Y)\\
&=-\varepsilon\{g((\nabla_Y\varphi)Z, X)-g((\nabla_X\varphi)Z,Y)\},
\end{align*}
which leads to \eqref{E:0:4.3}, and hence $\tau$ is a Codazzi
tensor.
\end{proof}

\begin{thm}
Let $M$ be a contact pseudo-metric manifold. Then the following
statements are true.
\begin{itemize}
\item [(i)] If $\tau$ is a Codazzi tensor, then $h^2=0$ and the
Ricci operator $Q$ satisfies
\begin{equation}\label{E:0:4.4}
Q\xi=2\varepsilon n\xi.
\end{equation}
\item [(ii)] $M$ is Sasakian if and only if $M$ satisfies
\eqref{E:02.4} and $\tau$ is a Codazzi tensor.
\end{itemize}
\end{thm}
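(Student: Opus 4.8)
The plan is to follow the strategy of Theorem~\ref{T:3.1}, this time exploiting the characterization of the Codazzi condition for $\tau$ just established, namely that $\tau$ is a Codazzi tensor if and only if \eqref{E:0:4.1} holds.

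For part (i), I would first assume that $\tau$ is a Codazzi tensor, so that $R(\xi,X)Y=\varepsilon(\nabla_X\varphi)Y$. Setting $Y=\xi$ and computing $(\nabla_X\varphi)\xi=-\varphi\nabla_X\xi$ by means of \eqref{E:0:2.3} (together with $\varphi\xi=0$, $h\xi=0$ and $\varphi h+h\varphi=0$), I expect to arrive at $R(\xi,X)\xi=\varphi^2X-\varepsilon hX$, which is exactly \eqref{E:0:3.3}. Substituting this into \eqref{E:0:2.6} then forces $h^2=0$, precisely as in the proof of Theorem~\ref{T:3.1}~(a). For the Ricci identity $Q\xi=2\varepsilon n\xi$, I would compute $g(Q\xi,X)=S(\xi,X)$ by tracing the curvature over a local $\varphi$-basis $\{e_a\}$. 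Writing $R(e_a,\xi)X=-R(\xi,e_a)X=-\varepsilon(\nabla_{e_a}\varphi)X$ and using that $\nabla_Z\varphi$ is skew-adjoint (which follows by differentiating $g(\varphi U,V)=-g(U,\varphi V)$), I can transfer the operator onto $e_a$ and invoke \eqref{E:0:2.7}, $\mathrm{tr}\,\nabla\varphi=2n\xi$. This should yield $S(\xi,X)=2n\eta(X)=2n\varepsilon\,g(\xi,X)$, whence $Q\xi=2\varepsilon n\xi$.

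For part (ii), the forward implication is immediate: if $M$ is Sasakian then $h=0$, so $\tau=0$ by \eqref{E:0:2.4} and is trivially a Codazzi tensor, while \eqref{E:02.4} holds on every Sasakian pseudo-metric manifold. For the converse, I would combine the two available expressions for $R(\xi,X)\xi$: the assumption \eqref{E:02.4} gives $R(\xi,X)\xi=\varphi^2X$ (this is \eqref{E:0:3.4}), whereas the Codazzi condition for $\tau$ gives $R(\xi,X)\xi=\varphi^2X-\varepsilon hX$ as derived in part (i). Comparing the two forces $h=0$, so $M$ is a $K$-contact pseudo-metric manifold; Lemma~\ref{L:0:2.1}, applied together with \eqref{E:02.4}, then yields that $M$ is Sasakian.

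I expect the main obstacle to be the trace computation for $Q\xi$: one must correctly handle the signs $g(e_a,e_a)=\pm1$ of the pseudo-orthonormal $\varphi$-basis and use the skew-adjointness of $\nabla\varphi$ to collapse the sum into $\mathrm{tr}\,\nabla\varphi$, after which \eqref{E:0:2.7} closes the argument. The remaining steps run closely parallel to Theorem~\ref{T:3.1}; the essential point is that here the identity $R(\xi,X)\xi=\varphi^2X-\varepsilon hX$ is produced by the Codazzi condition on $\tau$ rather than on $h$, and that $h^2=0$ does not by itself give $h=0$, so in part (ii) the hypothesis \eqref{E:02.4} is genuinely needed to conclude $h=0$.
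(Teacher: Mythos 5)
Your proposal is correct, and part (i) follows the paper essentially verbatim: you obtain $R(\xi,X)\xi=\varphi^2X-\varepsilon hX$ by setting $Y=\xi$ in \eqref{E:0:4.1}, compare with \eqref{E:0:2.6} to force $h^2=0$, and compute $Q\xi$ by tracing \eqref{E:0:4.1} over a pseudo-orthonormal basis and invoking \eqref{E:0:2.7}. The only cosmetic difference in (i) is that the paper moves $\nabla\varphi$ onto the basis vectors via the pair symmetry $R(e_i,X,\xi,e_i)=R(\xi,e_i,e_i,X)$, while you use the skew-adjointness of $\nabla_Z\varphi$; these are interchangeable one-line manipulations, and your sign bookkeeping ($\varepsilon_a$, $\eta(X)=\varepsilon g(X,\xi)$) works out. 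Where you genuinely diverge is the converse of part (ii). You compare the two expressions for $R(\xi,X)\xi$, namely $\varphi^2X$ from \eqref{E:02.4} and $\varphi^2X-\varepsilon hX$ from the Codazzi hypothesis, to conclude $h=0$, and then finish with Lemma~\ref{L:0:2.1}, exactly mirroring the proof of Theorem~\ref{T:3.1}~(b). The paper instead uses \eqref{E:0:4.1} at full strength (arbitrary $Y$, not only $Y=\xi$): writing $g((\nabla_X\varphi)Y,Z)=\varepsilon R(\xi,X,Y,Z)=-\varepsilon R(Z,Y,\xi,X)$ and substituting \eqref{E:02.4} into the last term produces the Sasakian condition \eqref{E:00:2.8} directly, bypassing both the intermediate conclusion $h=0$ and Lemma~\ref{L:0:2.1}. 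Your route is shorter once part (i) is in hand and makes the logical dependence on the $K$-contact characterization explicit; the paper's route is self-contained (it does not lean on Perrone's lemma) and shows that the Codazzi condition for $\tau$ together with \eqref{E:02.4} reconstructs $\nabla\varphi$ pointwise. Both arguments are sound.
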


\begin{proof}
(i). If $\tau$ is a Codazzi tensor, then \eqref{E:0:4.1} gives
\begin{equation*}
R(\xi, X)\xi=\varepsilon(\nabla_X\varphi)\xi=\varphi^2X-\varepsilon
h X,
\end{equation*}
where we used \eqref{E:0:2.3}. This implies
\begin{equation*}
\varphi R(\xi, \varphi X)\xi=-\varphi^2X-\varepsilon h X,
\end{equation*}
and so
\begin{equation}\label{E:0:4.5}
R(\xi, X)\xi-\varphi R(\xi, \varphi X)\xi=2\varphi^2X.
\end{equation}
Comparing \eqref{E:0:2.6} and \eqref{E:0:4.5}, we obtain $h^2=0$.

Now, if $\{e_i\}_i^{2n+1}$ is any local pseudo-orthonormal basis,
then considering \eqref{E:0:4.1} we get
\begin{align*}
S(X, \xi)&=\sum_{i=1}^{2n+1}\varepsilon_i R(e_i, X, \xi, e_i) =
\varepsilon\sum_{i=1}^{2n+1}\varepsilon_i
g((\nabla_{e_i}\varphi)e_i, X) \\
&=\varepsilon g(\text{tr}(\nabla \varphi), X),
\end{align*}
which by using \eqref{E:0:2.7} we have \eqref{E:0:4.4}.

\smallskip

(ii). Suppose that $M$ is a Sasakian pseudo-metric manifold, then
$M$ satisfies \eqref{E:02.4} and $h=0$.

Conversely, suppose that $M$ satisfies \eqref{E:02.4} and $\tau$ is
a Codazzi tensor. Then \eqref{E:0:4.1} shows that
\begin{align*}
g((\nabla_X\varphi)Y, Z)&=\varepsilon R(\xi, X, Y, Z)=-\varepsilon
R(Z, Y, \xi, X)\\ &=-\varepsilon\{ \eta(Y)g(Z,X)-\varepsilon g(Z,
\xi)g(X, Y)\}
\end{align*}
which gives \eqref{E:00:2.8}. Hence $M$ becomes a Sasakian
pseudo-metric manifold.
\end{proof}

\begin{cor}
Let $M$ be a contact Lorentzian manifold. If $\tau$ is a Codazzi
tensor, then $h=0$, that is, $M$ is $K$-contact Lorentzian manifold.
\end{cor}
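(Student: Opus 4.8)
The plan is to obtain this corollary as an immediate specialization of part~(i) of the theorem just proved, combined with the algebraic rigidity of self-adjoint operators in Lorentzian signature. First I would note that a contact Lorentzian manifold is in particular a contact pseudo-metric manifold, so the hypothesis that $\tau$ is a Codazzi tensor puts us directly under part~(i) and therefore yields $h^2=0$. The entire content of the corollary then reduces to the implication $h^2=0\Rightarrow h=0$, which is precisely the step that fails in the general pseudo-metric setting and must here be recovered from the Lorentzian assumption.

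To justify that implication I would use the established properties of $h$: it is $g$-self-adjoint, satisfies $\varphi h+h\varphi=0$, and annihilates $\xi$, so it acts on the $2n$-dimensional contact distribution $\ker\eta$, with $\varphi X$ an eigenvector for $-\lambda$ whenever $X$ is an eigenvector for $\lambda$. The key point is that, unlike the arbitrary indefinite case where a self-adjoint operator can carry nontrivial nilpotent (non-diagonalizable) Jordan blocks, a Lorentzian signature constrains the admissible Segre types of a self-adjoint operator so tightly that a nilpotent $h$ with $h^2=0$ is forced to vanish. This is exactly the fact recorded in the text and proved by Calvaruso \cite{Calv}, so I would simply invoke it to pass from $h^2=0$ to $h=0$.

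Finally, $h=0$ is equivalent to $\xi$ being a Killing vector field, which is the definition of $M$ being $K$-contact Lorentzian, completing the argument. The only genuine obstacle is the linear-algebraic implication $h^2=0\Rightarrow h=0$; since it is already available from \cite{Calv} for the Lorentzian signature, the proof is then a one-line deduction, in complete parallel with the corollary obtained earlier from Theorem~\ref{T:3.1}.
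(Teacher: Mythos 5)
Your proposal is correct and follows exactly the paper's (implicit) argument: part~(i) of the theorem gives $h^2=0$, and the Lorentzian rigidity fact from \cite{Calv} that $h^2=0$ forces $h=0$ — which the paper records just before the analogous corollary in Section~\ref{S:03} — then yields the $K$-contact conclusion.
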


%%% -----------------------------------------------------------------
%%% -----------------------------------------------------------------

\section{$\xi$-Sectional Curvatures}\label{S:05}

The \textit{$\xi$-sectional curvature} $K(\xi, X )$ of a contact
pseudo-metric manifold is defined by
\[
K(\xi, X)=\varepsilon {\varepsilon_{\! X}} g(R(\xi,X)X, \xi),
\]
where $X$ is a unit vector field such that $X\in \text{Ker}\, \eta$
and $g(X, X)={\varepsilon_{\! X}} =\pm 1$.

It is well known that a contact metric manifold is $K$-contact if
and only if all $\xi$-sectional curvatures are equal to +1 (see
\cite{BDE2}). The corresponding result in pseudo-Riemannian case
need not be true. In fact, we have the following:

\begin{thm}
If $M$ is a $K$-contact pseudo-metric manifold, then all
$\xi$-sectional curvatures are equal to $\varepsilon$.
\end{thm}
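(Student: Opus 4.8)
The plan is to compute $K(\xi,X)$ directly from the definition by exploiting the hypothesis that $M$ is $K$-contact, which by the preliminaries means $\xi$ is Killing, or equivalently $h=0$. The key simplification is that when $h=0$, the fundamental covariant derivative formula \eqref{E:0:2.3} collapses to
\[
\nabla_X\xi=-\varepsilon\varphi X.
\]
First I would use this to obtain an explicit expression for $R(\xi,X)\xi$. Substituting $h=0$ into \eqref{E:0:2.5} gives $(\nabla_\xi h)X=\varphi X+\varphi R(\xi,X)\xi$, but since $h=0$ identically its derivative vanishes, so $\varphi R(\xi,X)\xi=-\varphi X$. Applying $\varphi$ to both sides and using $\varphi^2=-I+\eta\otimes\xi$ together with $R(\xi,X)\xi\perp\xi$ for $X\in\ker\eta$ then yields $R(\xi,X)\xi=\varphi^2 X=-X$ for such $X$.

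Given this, the computation of the sectional curvature is immediate. For a unit vector $X\in\ker\eta$ with $g(X,X)=\varepsilon_X$, I would write
\[
K(\xi,X)=\varepsilon\,\varepsilon_X\,g(R(\xi,X)X,\xi)=-\varepsilon\,\varepsilon_X\,g(R(\xi,X)\xi,X),
\]
using the symmetry $g(R(\xi,X)X,\xi)=-g(R(\xi,X)\xi,X)$ of the curvature tensor. Plugging in $R(\xi,X)\xi=-X$ gives $g(R(\xi,X)\xi,X)=-g(X,X)=-\varepsilon_X$, so that $K(\xi,X)=-\varepsilon\,\varepsilon_X\cdot(-\varepsilon_X)=\varepsilon\,\varepsilon_X^2=\varepsilon$, since $\varepsilon_X^2=1$. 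This reproduces the claimed value and simultaneously explains why the contact metric answer $+1$ only arises in the Riemannian case $\varepsilon=+1$.

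I expect the main obstacle to be purely bookkeeping: deriving $R(\xi,X)\xi=\varphi^2X$ cleanly from the $K$-contact hypothesis. The subtlety is that one must argue $\nabla_\xi h=0$ (not merely $h=0$ at a point) before invoking \eqref{E:0:2.5}, and one must be careful with the placement of the factor $\varepsilon$ and with the sign conventions for $R$, which the paper notes are opposite to those in \cite{CalPer1}. Alternatively, I could bypass \eqref{E:0:2.5} entirely and compute $R(\xi,X)\xi$ from scratch by differentiating $\nabla_X\xi=-\varepsilon\varphi X$ and using $\nabla\varphi$ relations, but routing through the already-established identity \eqref{E:0:2.5} with $h=0$ is the shortest path. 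The final sign chase ensuring the two factors of $\varepsilon_X$ combine to $1$ is where a careless error would most likely creep in.
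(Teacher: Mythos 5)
Your proposal is correct and follows essentially the same route as the paper: set $h=0$, specialize \eqref{E:0:2.5} to get $\varphi R(\xi,X)\xi=-\varphi X$, apply $\varphi$ to obtain $R(\xi,X)\xi=-X$ on $\mathrm{Ker}\,\eta$, and then evaluate $K(\xi,X)$ from the definition. Your extra care in noting $\eta(R(\xi,X)\xi)=0$ when applying $\varphi^2$, and that $h=0$ identically forces $\nabla_\xi h=0$, only makes explicit two points the paper leaves silent.
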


\begin{proof}
If $M$ is a $K$-contact pseudo-metric manifold, then $h=0$. So
\eqref{E:0:2.5} becomes
\[
\varphi R(\xi, X)\xi=-\varphi X,
\]
which upon applying $\varphi$ gives
\[
R(\xi, X)\xi = - X
\]
for $X\in \text{Ker}\, \eta$. Thus
\[
K(\xi, X)=\varepsilon {\varepsilon_{\! X}} g(R(\xi,X)X,
\xi)=\varepsilon {\varepsilon_{\! X}} g(X, X)=\varepsilon.
\]
\end{proof}

\begin{rem}
The converse of above result is not true in general. In fact, a
contact pseudo-metric manifold $M$, which satisfies \eqref{E:02.4},
has $\xi$-sectional curvatures equal to $\varepsilon$. But we
already know that the condition \eqref{E:02.4} does not necessarily
imply that $M$ is a $K$-contact pseudo-metric manifold.
\end{rem}

Now we prove the following:

\begin{thm}
On a contact pseudo-metric manifold $M$, the $\xi$-sectional
curvatures satisfy
\begin{gather}
K(\xi, X)=\varepsilon \{1-{\varepsilon_{\! X}} g(h^2X,
X)-{\varepsilon_{\! X}} g((\nabla_\xi h)X, \varphi
X)\},\label{E:0:5.1}\\
K(\xi, X)=K(\xi, \varphi X)-2\varepsilon {\varepsilon_{\! X}}
g((\nabla_\xi h)X , \varphi X)\label{E:0:5.2}
\end{gather}
for any unit vector $X\in \text{Ker}\, \eta$.
\end{thm}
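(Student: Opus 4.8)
The plan is to reduce the $\xi$-sectional curvature to the operator $\ell=R(\cdot,\xi)\xi$ and then feed in the structural identity \eqref{E:0:2.5}. By the pair symmetry and skew-symmetry of $R$ one has $g(R(\xi,X)X,\xi)=-g(R(\xi,X)\xi,X)$, so that $K(\xi,X)=-\varepsilon{\varepsilon_{\! X}}\,g(R(\xi,X)\xi,X)$, and everything hinges on computing $R(\xi,X)\xi$. I would solve \eqref{E:0:2.5} for $\varphi R(\xi,X)\xi$ and apply $\varphi$ once more, using $\varphi^2=-I+\eta\otimes\xi$ together with $g(R(\xi,X)\xi,\xi)=0$ (hence $\eta(R(\xi,X)\xi)=0$) and the commutation $\varphi h^2=h^2\varphi$, which follows from $\varphi h+h\varphi=0$. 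Restricting to $X\in\text{Ker}\,\eta$, where $\varphi^2X=-X$, this gives $R(\xi,X)\xi=h^2X-X-\varphi(\nabla_\xi h)X$.

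Pairing this with $X$ and using that $\varphi$ is $g$-skew-adjoint, so that $-g(\varphi(\nabla_\xi h)X,X)=g((\nabla_\xi h)X,\varphi X)$, together with $g(X,X)={\varepsilon_{\! X}}$ and ${\varepsilon_{\! X}}^2=1$, yields \eqref{E:0:5.1} directly after multiplying by $-\varepsilon{\varepsilon_{\! X}}$. This first identity is thus a fairly mechanical consequence of \eqref{E:0:2.5}, \eqref{E:0:2.6} and the algebra of $\varphi$ and $h$.

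For \eqref{E:0:5.2} I would apply the identity \eqref{E:0:5.1} just obtained with $\varphi X$ in place of $X$. First note that $\varphi X$ is again a unit vector lying in $\text{Ker}\,\eta$, with $g(\varphi X,\varphi X)=g(X,X)={\varepsilon_{\! X}}$ by \eqref{E:e3} and $\eta(X)=0$, so $\varepsilon_{\varphi X}={\varepsilon_{\! X}}$. The $h^2$-term is unchanged, since $\varphi$ commutes with $h^2$ and $g(h^2\varphi X,\varphi X)=g(h^2X,X)$ again by \eqref{E:e3}, while the derivative term becomes $g((\nabla_\xi h)\varphi X,\varphi(\varphi X))=-g((\nabla_\xi h)\varphi X,X)$. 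Subtracting $K(\xi,\varphi X)$ from $K(\xi,X)$ then leaves $-\varepsilon{\varepsilon_{\! X}}\{g((\nabla_\xi h)X,\varphi X)+g((\nabla_\xi h)\varphi X,X)\}$.

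The crucial point, and the step I expect to be the main obstacle, is to recognize that the two cross terms coincide. This follows from the fact that $\nabla_\xi h$ is again a self-adjoint $(1,1)$-tensor: differentiating $g(hA,B)=g(A,hB)$ along $\xi$, with $A,B$ chosen $\nabla_\xi$-parallel at the point, shows $g((\nabla_\xi h)A,B)=g(A,(\nabla_\xi h)B)$, whence $g((\nabla_\xi h)\varphi X,X)=g((\nabla_\xi h)X,\varphi X)$. With this the two cross terms \emph{add} rather than cancel, producing exactly the factor $2$ in \eqref{E:0:5.2}. Beyond this observation the argument is pure bookkeeping; the only care needed is in tracking the curvature sign convention and the causal sign ${\varepsilon_{\! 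X}}$ throughout.
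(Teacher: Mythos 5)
Your proposal is correct and follows essentially the same route as the paper: it derives \eqref{E:0:5.1} by solving \eqref{E:0:2.5} for $R(\xi,X)\xi$ and pairing with $X$, then obtains \eqref{E:0:5.2} by substituting $\varphi X$ into \eqref{E:0:5.1}. The only difference is the one-line handling of the cross term: you invoke self-adjointness of $\nabla_\xi h$, whereas the paper uses the anti-commutation $(\nabla_\xi h)\varphi=-\varphi(\nabla_\xi h)$ coming from $h\varphi=-\varphi h$ and $\nabla_\xi\varphi=0$; both are valid and immediate.
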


\begin{proof}
Using \eqref{E:0:2.5}, we have
\begin{align*}
K(\xi, X)&=-\varepsilon {\varepsilon_{\! X}} R(\xi, X, \xi, X)\\
&=-\varepsilon {\varepsilon_{\! X}} g(-\varphi(\nabla_\xi
h)X-X+h^2X, X)\\
&=\varepsilon \{{\varepsilon_{\! X}} g(\varphi (\nabla_\xi h)X,
X)+{\varepsilon_{\! X}}^2-{\varepsilon_{\! X}} g(h^2X, X)\},
\end{align*}
which gives \eqref{E:0:5.1}.

Now, plugging $X$ by $\varphi X$ in \eqref{E:0:5.1} keeping $h
\varphi=-\varphi h$ and $\nabla_\xi\varphi=0$ in mind, we obtain
\begin{equation}\label{E:0:5.3}
K(\xi, \varphi X)=\varepsilon \{1-{\varepsilon_{\! X}} g(h^2X,
X)+{\varepsilon_{\! X}} g((\nabla_\xi h)X, \varphi X)\}.
\end{equation}
Now, from \eqref{E:0:5.1} and \eqref{E:0:5.3}, we get
\eqref{E:0:5.2}.
\end{proof}

\begin{thm}
Let $M$ be a contact pseudo-metric manifold with $\nabla_\xi h=0$.
Then  $h^2=0$ if and only if all $\xi$-sectional curvatures are
equal to $\varepsilon$.
\end{thm}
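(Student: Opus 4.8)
The plan is to extract both implications directly from the $\xi$-sectional curvature identity \eqref{E:0:5.1}. Under the hypothesis $\nabla_\xi h=0$, the last term of \eqref{E:0:5.1} vanishes, so that
\begin{equation*}
K(\xi, X)=\varepsilon\bigl(1-{\varepsilon_{\! X}}\,g(h^2X, X)\bigr)
\end{equation*}
for every unit vector $X\in\text{Ker}\,\eta$. The forward implication is then immediate: if $h^2=0$ then $g(h^2X,X)=0$, whence $K(\xi,X)=\varepsilon$ for all such $X$.

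For the converse I would assume $K(\xi,X)=\varepsilon$ for every unit $X\in\text{Ker}\,\eta$; the displayed identity forces ${\varepsilon_{\! X}}\,g(h^2X,X)=0$, that is, $g(h^2X,X)=0$ on every unit vector of $\text{Ker}\,\eta$. The real content is to promote this to $h^2=0$, and it is here that the pseudo-metric setting demands care.

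First I would record the structural facts that drive the argument. The operator $h^2$ is self-adjoint (being the square of the self-adjoint $h$), it annihilates $\xi$ (since $h\xi=0$), and it preserves $\text{Ker}\,\eta$: indeed $\eta(h^2X)=\varepsilon g(h^2X,\xi)=\varepsilon g(X,h^2\xi)=0$. Moreover $\text{Ker}\,\eta=\xi^{\perp}$, and as $g(\xi,\xi)=\varepsilon\neq 0$ the metric restricts to a nondegenerate form on $\text{Ker}\,\eta$.

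The main obstacle is that the quadratic form $X\mapsto g(h^2X,X)$ is known to vanish only on unit vectors, whereas the polarization identity that yields $h^2=0$ requires its vanishing on all of $\text{Ker}\,\eta$. I would close this gap in two steps. For any non-null $X\in\text{Ker}\,\eta$, rescaling to a unit vector and using homogeneity gives $g(h^2X,X)=0$; since the non-null vectors form a dense open subset of $\text{Ker}\,\eta$ (the set of null vectors being a proper algebraic subset) and the map is continuous, it follows that $g(h^2X,X)=0$ for all $X\in\text{Ker}\,\eta$. Polarizing and invoking self-adjointness then yields $g(h^2X,Y)=0$ for all $X,Y\in\text{Ker}\,\eta$; because $h^2X\in\text{Ker}\,\eta$ and $g$ is nondegenerate there, this forces $h^2X=0$ on $\text{Ker}\,\eta$, and together with $h^2\xi=0$ we conclude $h^2=0$, completing the proof.
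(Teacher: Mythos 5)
Your proof is correct, and its core coincides with the paper's: under $\nabla_\xi h=0$ the identity \eqref{E:0:5.1} reduces to $K(\xi,X)=\varepsilon\{1-\varepsilon_X g(h^2X,X)\}$, which is exactly what the paper obtains by combining \eqref{E:0:5.4} (derived from \eqref{E:0:2.6}) with the symmetry $K(\xi,X)=K(\xi,\varphi X)$ of \eqref{E:0:5.5}; reading it off \eqref{E:0:5.1} directly, as you do, is an interchangeable derivation. Where you genuinely add something is at the end: the paper stops at the pointwise equivalence ``$K(\xi,X)=\varepsilon$ if and only if $g(h^2X,X)=0$'' for unit $X\in\text{Ker}\,\eta$ and declares the proof complete, leaving implicit the promotion of the vanishing of the quadratic form $X\mapsto g(h^2X,X)$ on unit vectors to the operator identity $h^2=0$. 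In the Riemannian case this step is immediate, since $g(h^2X,X)=g(hX,hX)$ forces $hX=0$; in the pseudo-metric case it only says $hX$ is null, so an argument is genuinely required, and your chain --- rescaling to handle all non-null vectors, density and continuity to cover the null cone, polarization using self-adjointness of $h^2$, and nondegeneracy of $g$ on $\text{Ker}\,\eta=\xi^{\perp}$ together with $h^2(\text{Ker}\,\eta)\subset\text{Ker}\,\eta$ and $h^2\xi=0$ --- is a correct and complete way to close it. In this respect your write-up is more careful than the paper's own proof.
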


\begin{proof}
Taking the inner product of the unit vector field $X\in \text{Ker}\,
\eta$ with \eqref{E:0:2.6} yields the following formula for
sectional curvatures:
\begin{equation}\label{E:0:5.4}
K(\xi, X)+K(\xi, \varphi X)=2\varepsilon \{ 1-{\varepsilon_{\! X}}
g(h^2 X, X)\}.
\end{equation}
Now, since $\nabla_\xi h=0$, \eqref{E:0:5.2} yields
\begin{equation}\label{E:0:5.5}
K(\xi, X)=K(\xi, \varphi X)
\end{equation}
for any unit vector $X\in \text{Ker } \eta$. From \eqref{E:0:5.4}
and \eqref{E:0:5.5} we see
\begin{equation*}
K(\xi, X)=\varepsilon \text{\; if and only if \;} g(h^2 X, X)=0.
\end{equation*}
This concludes the proof.
\end{proof}

\begin{cor}
A contact Lorentzian manifold is a $K$-contact Lorentzian manifold
if and only if all $\xi$-sectional curvatures are equal to $-1$.
\end{cor}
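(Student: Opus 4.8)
The plan is to extract both implications from the one structural feature that sets the Lorentzian case apart: since $\xi$ is timelike we have $\varepsilon=-1$ and $g(\xi,\xi)=-1$, so the contact distribution $\ker\eta=\xi^{\perp}$ inherits a \emph{positive definite} metric. First I would dispose of the forward implication directly: if $M$ is $K$-contact Lorentzian then $h=0$, and the first theorem of Section~\ref{S:05} then gives that every $\xi$-sectional curvature equals $\varepsilon=-1$.

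For the converse I would work with the unconditional identity \eqref{E:0:5.4} rather than the immediately preceding theorem, so as to avoid assuming $\nabla_\xi h=0$ in advance. Assume all $\xi$-sectional curvatures equal $-1=\varepsilon$. Since $\eta(\varphi X)=0$ and $g(\varphi X,\varphi X)=g(X,X)$ on $\ker\eta$, the vector $\varphi X$ is again a unit vector in $\ker\eta$, so both $K(\xi,X)$ and $K(\xi,\varphi X)$ equal $\varepsilon$ for every unit $X\in\ker\eta$. Feeding this into \eqref{E:0:5.4} collapses the right-hand side and yields $g(h^2X,X)=0$ for all such $X$, that is $h^2=0$.

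It remains to upgrade $h^2=0$ to $h=0$, and this is the step I expect to be the crux, since it is precisely the implication that \emph{fails} for a general contact pseudo-metric structure, where $h$ need not be diagonalizable, as recalled in the remark after Theorem~\ref{T:3.1}. Here it is rescued by definiteness: $h$ is self-adjoint and preserves $\ker\eta$, because $\eta(hX)=\varepsilon g(hX,\xi)=\varepsilon g(X,h\xi)=0$, so $g(h^2X,X)=g(hX,hX)$ on the positive definite space $\ker\eta$; definiteness then forces $hX=0$ for every $X\in\ker\eta$, and together with $h\xi=0$ this gives $h=0$. Equivalently, one may simply invoke the fact from \cite{Calv} that $h^2=0$ implies $h=0$ on a contact Lorentzian manifold. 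Once $h=0$ is established, $M$ is $K$-contact Lorentzian and the proof is complete; apart from this diagonalizability point, everything is a routine specialization of \eqref{E:0:5.4} to $\varepsilon=-1$, so I anticipate no further obstacle.
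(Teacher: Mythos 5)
Your proof is correct, and it uses the same two ingredients that the paper's corollary rests on: the identity \eqref{E:0:5.4} and the Lorentzian rigidity $h^2=0\Rightarrow h=0$ from \cite{Calv}. Where you genuinely improve on the paper's presentation is in the handling of the hypothesis $\nabla_\xi h=0$: the corollary sits immediately after a theorem that assumes $\nabla_\xi h=0$, and if one derives the converse direction by citing that theorem verbatim, one would need this extra hypothesis, which the corollary does not grant. You correctly note that in that theorem the hypothesis is used only to obtain $K(\xi,X)=K(\xi,\varphi X)$ (equation \eqref{E:0:5.5}), and that once \emph{all} $\xi$-sectional curvatures are assumed equal to $-1$ this equality is automatic --- your check that $\varphi X$ is again a unit vector of $\operatorname{Ker}\eta$ is exactly what makes this legitimate --- so \eqref{E:0:5.4} alone gives $g(h^2X,X)=0$. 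Your second refinement, replacing the citation of \cite{Calv} by the direct argument that $h$ preserves $\operatorname{Ker}\eta$ and $g(h^2X,X)=g(hX,hX)\ge 0$ on the positive definite distribution $\xi^{\perp}$ (with $\xi$ timelike, $\varepsilon=-1$), is also sound and makes the step self-contained; it is in substance the same reason the implication holds in the Lorentzian setting. The forward direction, via $h=0$ and the first theorem of Section~\ref{S:05}, matches the paper. In short: same route, but with a patch that the corollary, as stated without the $\nabla_\xi h=0$ assumption, actually requires.
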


As we discussed in introduction, due to the fact that $h^2=0$ does
not imply $h=0$ in a contact pseudo-metric manifold, the parallel
condition of $\ell$ does not imply that $\xi$-sectional curvatures
vanish. However, we have the following:

\begin{thm}
If $M$ is a contact pseudo-metric manifold with $\nabla_\xi h=0$ and
$\nabla \ell=0$, then all $\xi$-sectional curvatures vanish.
\end{thm}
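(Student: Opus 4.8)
The plan is to push everything through the single operator $\ell$ and reduce the statement to an operator identity on the contact distribution $\ker\eta$. Since $\ell X=R(X,\xi)\xi$, the curvature symmetries give $K(\xi,X)=\varepsilon\varepsilon_X g(R(\xi,X)X,\xi)=\varepsilon\varepsilon_X g(\ell X,X)$, so the claim is equivalent to $g(\ell X,X)=0$ for every unit $X\in\ker\eta$. First I would record what $\nabla_\xi h=0$ does: substituting it into \eqref{E:0:2.5} gives $\varphi R(\xi,X)\xi=(h^2-I)\varphi X$, and applying $\varphi$ again (using $\varphi h^2=h^2\varphi$ and $\varphi^2X=-X$ on $\ker\eta$) yields $R(\xi,X)\xi=(h^2-I)X$, that is, $\ell X=(I-h^2)X$ for $X\in\ker\eta$. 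In particular $K(\xi,X)=\varepsilon\{1-\varepsilon_X g(h^2X,X)\}$, recovering \eqref{E:0:5.1} under $\nabla_\xi h=0$, so the theorem amounts to showing that $\ell$ vanishes on $\ker\eta$, equivalently $h^2=I$ there. Since $\ell$ is self-adjoint, $g|_{\ker\eta}$ is nondegenerate (as $\ker\eta=\xi^{\perp}$ with $\xi$ non-null), and the non-null vectors are dense, proving $g(\ell X,X)=0$ on unit vectors is in fact the same as proving $\ell|_{\ker\eta}=0$, so I may aim for the latter.

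Next I would use $\nabla\ell=0$ evaluated at $\xi$. Because $\ell\xi=R(\xi,\xi)\xi=0$, parallelism forces $0=(\nabla_X\ell)\xi=-\ell(\nabla_X\xi)$, so $\ell(\nabla_X\xi)=0$ for all $X$. Substituting \eqref{E:0:2.3} in the form $\nabla_X\xi=-\varphi(\varepsilon I+h)X$, and using that $\ell=I-h^2$ commutes with $\varphi$ while $\varphi$ is injective on $\ker\eta$, I obtain the algebraic identity $(I-h^2)(\varepsilon I+h)=0$ on $\ker\eta$. Multiplying by the commuting factor $(\varepsilon I-h)$ and using $\varepsilon^2=1$ collapses this to $(I-h^2)^2=0$, i.e. $\ell^2=0$ on $\ker\eta$.

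Here the genuine difficulty appears, and it is exactly the phenomenon emphasized elsewhere in the paper: in indefinite signature $\ell^2=0$ does not force $\ell=0$, since $I-h^2$ may carry a nontrivial nilpotent part (the same reason $h^2=0$ need not give $h=0$). Every consequence of $\nabla\ell=0$ paired against $\xi$ merely reproduces $\ell^2=0$, so to kill the nilpotent part I must feed in $\nabla\ell=0$ in the directions tangent to $\ker\eta$: pairing $(\nabla_X\ell)Y=0$ with vectors of $\ker\eta$ shows that $\ell$ (hence $h^2$) is parallel for the transverse connection $Z\mapsto(\nabla_XZ)^{\ker\eta}$, equivalently that $h(\nabla_Xh)+(\nabla_Xh)h=0$ on $\ker\eta$. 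The decisive step I would attempt is to combine this transverse parallelism with the Ricci identity applied to the hypothesis $\nabla_\xi h=0$, which produces curvature–commutator terms $[R(X,\xi),h]$, together with the relation $[R(X,Y),\ell]=0$ that holds automatically for the parallel tensor $\ell$; these should constrain the summand $I-h^2$ rigidly enough to force it to vanish, giving $h^2=I$ and hence $\ell=0$ on $\ker\eta$. I expect this elimination of the nilpotent part to be the real obstacle: the reductions above are routine, whereas closing the gap from $\ell^2=0$ to $\ell=0$ is precisely where the pseudo-metric case diverges from the Riemannian one and where the full strength of $\nabla\ell=0$, not merely its value at $\xi$, must be exploited.
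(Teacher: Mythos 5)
Your reductions are correct and run parallel to the paper's own up to the key operator identity: under $\nabla_\xi h=0$ you recover $\ell=I-h^2$ on $\ker\eta$ (this is \eqref{E:0:5.6}), and $(\nabla_X\ell)\xi=-\ell(\nabla_X\xi)=0$ together with \eqref{E:0:2.3} and the injectivity of $\varphi$ on $\ker\eta$ gives $(I-h^2)(\varepsilon I+h)=0$ on $\ker\eta$, which is exactly the paper's \eqref{E:0:5.7}. The genuine gap is what you do with it. Multiplying by the commuting factor $(\varepsilon I-h)$ to obtain $(I-h^2)^2=0$ discards information, and the program you then outline to escape $\ell^2=0$ --- transverse parallelism of $\ell$, Ricci identities applied to $\nabla_\xi h=0$, curvature--commutator relations --- is never actually executed; it is only asserted that these ``should constrain'' the nilpotent part. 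So, as written, the proof is incomplete at precisely the step you yourself identify as decisive, and there is no evidence the sketched machinery would close it.

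The obstacle you fear is, however, illusory: you never used the anticommutation $h\varphi=-\varphi h$. On $\ker\eta$ the map $\varphi$ is invertible and conjugation by it sends $h\mapsto\varphi h\varphi^{-1}=-h$ while fixing $h^2$; conjugating your identity therefore yields the companion identity $(I-h^2)(\varepsilon I-h)=0$ on $\ker\eta$, and adding the two gives $2\varepsilon(I-h^2)=0$, i.e.\ $h^2=I$ and hence $\ell=0$ on $\ker\eta$, finishing the proof with no further input from $\nabla\ell=0$. Note that the bare identity genuinely does not suffice on its own --- for instance $h=-\varepsilon I+N$ with $N\neq0$, $N^2=0$ satisfies $(I-h^2)(\varepsilon I+h)=2\varepsilon N^2=0$ while $I-h^2=2\varepsilon N\neq0$ --- so the missing ingredient is the $\varphi$-anticommutation, not extra parallelism. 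This is exactly how the paper closes the argument, in scalar form: it pairs \eqref{E:0:5.7} with $\varphi X$ to get \eqref{E:0:5.8}, substitutes $X\mapsto\varphi X$ and pairs with $X$ to get \eqref{E:0:5.9} (the sign flips of the odd powers of $h$ being precisely the conjugation trick), and subtracts to obtain $g(h^2X,X)={\varepsilon_{\! X}}$ in \eqref{E:0:5.10}, which with $\nabla_\xi h=0$ in \eqref{E:0:5.1} gives $K(\xi,X)=0$.
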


\begin{proof}
Applying by $\varphi$ on both sides of \eqref{E:0:2.5} and using
$\nabla_\xi h=0$, it follows that
\begin{equation}\label{E:0:5.6}
\ell X = -h^2 X + X-\eta(X)\xi,
\end{equation}
for any $X\in TM$. Now, in view of $(\nabla_X \ell)\xi=0$ and
\eqref{E:0:5.6}, we have
\begin{equation}\label{E:0:5.7}
\varepsilon h^2 \varphi X-h^3 \varphi X-\varepsilon \varphi X+h
\varphi X=0.
\end{equation}
If  $X\in \text{Ker } \eta$ is a unit vector field, then taking the
inner product of $\varphi X$ with \eqref{E:0:5.7} leads to
\begin{equation}\label{E:0:5.8}
\varepsilon g(h^2 X, X)+g(h^3 X, X)-\varepsilon g(X,X)-g(h X, X)=0.
\end{equation}
Now replacing $X$ by  $\varphi X$ in \eqref{E:0:5.7} and then taking
inner product of $X$ with the resulting equation gives
\begin{equation}\label{E:0:5.9}
-\varepsilon g(h^2 X, X)+g(h^3 X, X)+\varepsilon g(X,X)-g(h X, X)=0.
\end{equation}
Now subtracting \eqref{E:0:5.8} from \eqref{E:0:5.9} yields
\begin{equation}\label{E:0:5.10}
g(h^2 X, X)=g(X,X)={\varepsilon_{\! X}}
\end{equation}
for any unit vector $X\in \text{Ker}\, \eta$. Using \eqref{E:0:5.10}
and $\nabla_\xi h=0$ in \eqref{E:0:5.1} we conclude that $K(\xi,
X)=0$.
\end{proof}

%%% ------------------------------------------------------------------
%%% ------------------------------------------------------------------

\section{Almost $C\!R$ Structures}\label{S:06}

First, we recall few notions of almost $C\!R$ structures (see
\cite{Drag,Per2,Per4}). Let $M$ be a $(2n+1)$-dimensional
(connected) differentiable manifold. Let $\mathcal{H}(M)$ be a
smooth real subbundle of rank $2n$ of the tangent bundle $TM$ (also
called Levi distribution), and $J:\mathcal{H}(M)\to \mathcal{H}(M)$
be a smooth bundle isomorphism such that $J^2=-I$. Then the pair
$(\mathcal{H}(M), J)$ is called an almost $C\!R$ structure on $M$.
An almost $C\!R$ structure is called a $C\!R$ structure if it is
integrable, that is, the following two conditions are satisfied
\begin{gather}
[JX,Y]+[X,JX]\in \mathcal{H}(M),\label{E:6.1}\\
J([JX,Y]+[X,JY])=[JX,JY]-[X,Y]\label{E:6.2}
\end{gather}
for all $X,Y\in \mathcal{H}(M)$.

On an almost $C\!R$ manifold $(M,\mathcal{H}(M),J)$, we define a
$1$-form $\theta$ such that $\text{Ker}\, \theta=\mathcal{H}(M)$, and such a differential $1$-form $\theta$ is called a pseudo-Hermitian structure on $M$. Then on $\mathcal{H}(M)$, the Levi form $L_\theta$ is defined by
\begin{equation*}
L_\theta(X,Y)=d\theta(X,JY)
\end{equation*}
for all $X,Y\in \mathcal{H}(M)$. Furthermore, we define a
$(0,2)$-tensor field on $\mathcal{H}(M)$ by
\begin{equation*}
\alpha(X,Y)=(\nabla_X\theta)(JY)+(\nabla_{JX}\theta)(Y)
\end{equation*}
for all $X,Y\in\mathcal{H}(M)$.

Then we have the following:

\begin{prop}
For an almost $C\!R$ structure $(\mathcal{H}(M),J,\theta)$, the
following statements are equivalent\/{\rm :}
\begin{itemize}
\item[(i)] $L_\theta$ is Hermitian, that is, $L_\theta(JX,JY)=L_\theta(X,Y)$;
\item[(ii)] $L_\theta$ is symmetric, that is, $L_\theta(X,Y)=L_\theta(Y,X)$;
\item[(iii)] $[JX,Y]+[X,JY]\in \mathcal{H}(M)$;
\item[(iv)]  $\alpha$ is symmetric, that is, $\alpha(X,Y)=\alpha(Y,X)$.
\end{itemize}
\end{prop}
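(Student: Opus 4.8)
The plan is to reduce all four conditions to a single horizontal identity and then chain the equivalences through condition~(iii). The starting observation is that for horizontal fields $X,Y\in\mathcal{H}(M)=\mathrm{Ker}\,\theta$ one has $\theta(X)=\theta(Y)=0$ identically, so the intrinsic formula for the exterior derivative (with the same $\tfrac12$-convention fixed for $d\eta$ in Section~\ref{S:02}) collapses to $d\theta(X,Y)=-\tfrac12\theta([X,Y])$; this difference is genuinely tensorial despite the bracket, so no choice of extensions intervenes. Consequently $L_\theta(X,Y)=d\theta(X,JY)=-\tfrac12\theta([X,JY])$, since $JY\in\mathcal{H}(M)$. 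I would record at the outset that $d\theta$ is skew and that $J^2=-I$ on $\mathcal{H}(M)$; these two facts carry most of the weight.

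First I would establish (ii)$\Leftrightarrow$(iii). From the reduced formula,
\[
L_\theta(X,Y)-L_\theta(Y,X)=-\tfrac12\theta([X,JY])+\tfrac12\theta([Y,JX])=-\tfrac12\theta\big([JX,Y]+[X,JY]\big),
\]
so $L_\theta$ is symmetric precisely when $[JX,Y]+[X,JY]$ lies in $\mathrm{Ker}\,\theta=\mathcal{H}(M)$ for all horizontal $X,Y$. Next, for (i)$\Leftrightarrow$(iii), I would substitute and invoke $J^2=-I$ to get $L_\theta(JX,JY)=-\tfrac12\theta([JX,-Y])=\tfrac12\theta([JX,Y])$, whence
\[
L_\theta(JX,JY)-L_\theta(X,Y)=\tfrac12\theta\big([JX,Y]+[X,JY]\big),
\]
which vanishes under exactly the same condition~(iii). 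This binds (i), (ii) and (iii) together.

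For (iv)$\Leftrightarrow$(ii) I would pass from the Lie-bracket description of $d\theta$ to the covariant one. Since $\nabla$ is the Levi-Civita connection of the associated pseudo-metric it is torsion-free, so $2\,d\theta(X,Y)=(\nabla_X\theta)(Y)-(\nabla_Y\theta)(X)$. Expanding the antisymmetric part of $\alpha$ and grouping its four terms into two such differences yields
\[
\alpha(X,Y)-\alpha(Y,X)=2\,d\theta(X,JY)+2\,d\theta(JX,Y)=2\big(L_\theta(X,Y)-L_\theta(Y,X)\big),
\]
where in the last step I use skew-symmetry of $d\theta$ to rewrite $d\theta(JX,Y)=-d\theta(Y,JX)=-L_\theta(Y,X)$. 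Hence $\alpha$ is symmetric iff $L_\theta$ is, closing the loop (i)$\Leftrightarrow$(iii)$\Leftrightarrow$(ii)$\Leftrightarrow$(iv).

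The computations for (i), (ii), (iii) are essentially bookkeeping once the horizontal reduction is in place; the only genuine care there is consistency of signs stemming from the $\tfrac12$ in $d\theta$ and from the curvature/bracket conventions of Section~\ref{S:02}. The step I expect to be the real obstacle is (iv), as it is the only one that leaves the purely Lie-theoretic setting and invokes the connection: one must be sure that $\nabla$ is torsion-free for the identity $2\,d\theta=(\nabla_X\theta)(Y)-(\nabla_Y\theta)(X)$ to hold, and must correctly match $\alpha$'s two summands to $d\theta(X,JY)$ and $d\theta(JX,Y)$ rather than to the same term twice. Placing $J$ in the right slot and tracking the sign produced by $J^2=-I$ is precisely where a careless computation would break.
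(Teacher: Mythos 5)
Your proof is correct and follows essentially the same route as the paper: both rest on the two expressions $d\theta(X,Y)=-\tfrac12\theta([X,Y])$ for horizontal fields and $2\,d\theta(X,Y)=(\nabla_X\theta)Y-(\nabla_Y\theta)X$ from torsion-freeness, the only cosmetic difference being that you chain (i) to (iii) while the paper notes (i)$\Leftrightarrow$(ii) directly via skew-symmetry of $d\theta$ and $J^2=-I$.
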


\begin{proof}
It is immediate that (i)$\Leftrightarrow$(ii) and
(ii)$\Leftrightarrow$(iii) follows from the fact that
\[
d\theta(X,Y)=-\frac{1}{2}\theta([X,Y])
\]
for all $X,Y\in\mathcal{H}(M)$. On the other hand, as in general
\begin{equation*}
d\theta(X,Y)=\frac{1}{2}((\nabla_X\theta)Y-(\nabla_Y\theta)X),
\end{equation*}
the condition (ii) is equivalent to
\[
(\nabla_X\theta)(JY)+(\nabla_{JX}\theta)Y=(\nabla_Y\theta)(JX)+
(\nabla_{JY}\theta)X,
\]
and so (ii)$\Leftrightarrow$(iv).
\end{proof}

An almost pseudo-Hermitian $C\!R$
structure $(\mathcal{H}(M), J,\theta)$ is said to be nondegenerate
if the Levi form $L_\theta$ is a nondegenerate Hermitian form, and
so the $1$-form $\theta$ is a contact form.

Let $(M, \mathcal{H}(M),J,\theta)$ be a nondegenerate
pseudo-Hermitian almost $C\!R$ manifold. We extend the complex
structure $J$ to an endomorphism $\varphi$ of the tangent bundle
$TM$ in such a way that $\theta=J$ on $\mathcal{H}(M)$ and $\varphi
\xi=0$, where $\xi$ is the Reeb vector field of $\theta$. Then the
Webster metric $g_\theta$, which is a pseudo-Riemannian metric, is
defined by
\begin{equation*}
g_\theta(X,Y)=L_\theta(X,Y),\quad g_\theta(X,\xi)=0, \quad
g_\theta(\xi,\xi)=\varepsilon
\end{equation*}
for all $X, Y\in \mathcal{H}(M)$. In this case,
$(\varphi,\xi,\eta=-\theta, g=g_\theta)$ defines a contact
pseudo-metric structure on $M$. Conversely, if
$(\varphi,\xi,\eta,g)$ is a contact pseudo-metric structure, then
$(\mathcal{H}(M), J, \theta)$, where $\mathcal{H}(M)=\text{Ker}\,
\eta$, $\theta=-\eta$, and $J=\varphi_{\arrowvert\mathcal{H}(M)}$,
defines a nondegenerate almost $C\!R$ structure on $M$. Thus, we
have:

\begin{prop}[\cite{Per2}]
The notion of nondegenerate almost $C\!R$ structure
$(\mathcal{H}(M), J, \theta)$ is equivalent to the notion of contact
pseudo-metric structure $(\varphi,\xi,\eta,g)$.
\end{prop}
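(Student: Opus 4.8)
The plan is to prove the asserted equivalence by making precise the two passages sketched just before the statement and checking that each produces a structure of the advertised type, after which mutual invertibility is immediate. In one direction, from a nondegenerate pseudo-Hermitian almost $C\!R$ structure $(\mathcal{H}(M),J,\theta)$ I take $\xi$ to be the Reeb field of the contact form $\theta$, extend $J$ to a bundle endomorphism $\varphi$ of $TM$ by $\varphi=J$ on $\mathcal{H}(M)$ and $\varphi\xi=0$, set $\eta=-\theta$, and let $g=g_\theta$ be the Webster metric. In the reverse direction, from a contact pseudo-metric structure $(\varphi,\xi,\eta,g)$ I put $\mathcal{H}(M)=\ker\eta$, $\theta=-\eta$, and $J=\varphi|_{\mathcal{H}(M)}$. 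Since these two assignments invert one another at the level of the defining data (the Reeb field of $-\eta$ is $\xi$, the Webster metric of the reconstructed $\theta$ is $g$, and so on), all the content lies in the two verifications.

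For the forward passage I would check the almost contact identities \eqref{E:e1} termwise: $\varphi\xi=0$ holds by construction, $\eta\circ\varphi=0$ because $\varphi$ takes values in $\mathcal{H}(M)=\ker\theta$, and $\varphi^2=-I+\eta\otimes\xi$ follows by splitting $TM=\mathcal{H}(M)\oplus\mathbb{R}\xi$ and using $J^2=-I$ on the first summand together with $\varphi\xi=0$, $\eta(\xi)=1$ on the second. The compatibility \eqref{E:e3} reduces on $\mathcal{H}(M)$ to $g_\theta(JX,JY)=g_\theta(X,Y)$, i.e. to $L_\theta$ being Hermitian, which is part of the nondegeneracy hypothesis and, by the preceding Proposition, also yields the symmetry of $g_\theta$; the $\xi$-components are immediate from $g_\theta(X,\xi)=0$ and $g_\theta(\xi,\xi)=\varepsilon$. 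The decisive point is the contact condition $d\eta=\Phi$, which on $\mathcal{H}(M)$ I would obtain from
\[
\Phi(X,Y)=g_\theta(X,JY)=L_\theta(X,JY)=d\theta(X,J^2Y)=-d\theta(X,Y)=d\eta(X,Y),
\]
while the mixed terms vanish on both sides because $\xi$ is the Reeb field, so $\iota_\xi d\theta=0$ gives $d\eta(\xi,X)=0=g(\xi,\varphi X)=\Phi(\xi,X)$.

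For the reverse passage I would show that $(\mathcal{H}(M),J,\theta)$ is a nondegenerate almost $C\!R$ structure: $J^2=-I$ on $\mathcal{H}(M)$ is \eqref{E:e1} restricted to $\ker\eta$; that $\theta=-\eta$ is a contact form is precisely $\eta\wedge\Phi^n\neq0$; and the Levi form satisfies $L_\theta(X,Y)=d\theta(X,JY)=-\Phi(X,JY)=g(X,Y)$ on $\ker\eta$, so it is Hermitian and is nondegenerate exactly because $g$ is nondegenerate on $\ker\eta$. Composing the two passages returns the original data, which establishes the claimed bijective correspondence.

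The step I expect to be the main obstacle is not conceptual but the consistent bookkeeping of signs inherent to the pseudo-Riemannian setting. One must reconcile the Reeb normalization of $\theta$ with the requirement $\eta(\xi)=1$ under $\eta=-\theta$, and carry the sign $\varepsilon=g(\xi,\xi)=\pm1$ --- which records the causal character of $\xi$ and the signature of $L_\theta$ --- through the compatibility relation $\eta(X)=\varepsilon g(X,\xi)$. Choosing these conventions coherently is exactly what makes the nondegenerate Hermitian condition on the Levi form coincide with the nondegeneracy and compatibility of the Webster metric, and it is the only place where the argument genuinely departs from the classical Riemannian correspondence.
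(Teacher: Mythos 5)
Your proposal is correct and takes essentially the same route as the paper: the paper offers no independent proof but simply exhibits the two mutually inverse constructions (extension of $J$ to $\varphi$ with the Webster metric $g_\theta$, and conversely $\mathcal{H}(M)=\text{Ker}\,\eta$, $\theta=-\eta$, $J=\varphi_{\arrowvert\mathcal{H}(M)}$), citing \cite{Per2} for the verifications you carry out. Your computations (in particular $L_\theta=g$ on $\text{Ker}\,\eta$ and the identity $\Phi=d\eta$ via $J^2=-I$) match that reference, and the Reeb-normalization caveat you flag is indeed just the sign convention the paper adopts implicitly in setting $\eta=-\theta$.
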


Now, we prove the following:

\begin{thm}
The nondegenerate almost $C\!R$ structure $(\mathcal{H}(M), J,
\theta)$ corresponding to almost contact pseudo-metric manifold $M$
is a $C\!R$ manifold if and only if
\begin{equation}\label{E:6.3}
(\nabla_X J)Y-(\nabla_{JX}J)JY=\alpha(X,Y)\xi
\end{equation}
for all $X,Y\in\mathcal{H}(M)$.
\end{thm}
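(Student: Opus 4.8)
The plan is to push everything through the contact pseudo-metric data $(\varphi,\xi,\eta,g)$ attached to $(\mathcal{H}(M),J,\theta)$ by the preceding correspondence, so that $J=\varphi_{|\mathcal{H}(M)}$, $\theta=-\eta$, $g$ is the Webster metric and $\nabla$ its Levi-Civita connection; the symbol $(\nabla_X J)Y$ in \eqref{E:6.3} is read as $(\nabla_X\varphi)Y$ for $X,Y\in\mathcal{H}(M)$. First I would dispose of \eqref{E:6.1}: by hypothesis the structure is nondegenerate, so $L_\theta$ is Hermitian, and by the Proposition above this is exactly condition \eqref{E:6.1}. Hence integrability of $(\mathcal{H}(M),J)$ reduces to the single condition \eqref{E:6.2}. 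Writing $N_\varphi(X,Y)=[\varphi X,\varphi Y]-\varphi[\varphi X,Y]-\varphi[X,\varphi Y]+\varphi^2[X,Y]$ for the Nijenhuis torsion of $\varphi$ and expanding the brackets via torsion-freeness, \eqref{E:6.2} is equivalent to the vanishing of the $\mathcal{H}(M)$-component $n(X,Y)$ of $N_\varphi(X,Y)$. So the target becomes: \eqref{E:6.3} holds if and only if $n\equiv 0$ on $\mathcal{H}(M)$.

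Next I would split the left-hand side $\mathcal{A}(X,Y):=(\nabla_X\varphi)Y-(\nabla_{\varphi X}\varphi)\varphi Y$ of \eqref{E:6.3} into its $\xi$- and $\mathcal{H}(M)$-parts. Using $\nabla g=0$, $\eta\circ\varphi=0$ and $\varphi^2=-I+\eta\otimes\xi$, a short computation gives $\eta(\mathcal{A}(X,Y))=-(\nabla_X\eta)(\varphi Y)-(\nabla_{\varphi X}\eta)(Y)$, which is precisely $\alpha(X,Y)$ after substituting $\theta=-\eta$. Thus the $\xi$-component of \eqref{E:6.3} is automatic, and \eqref{E:6.3} is equivalent to the vanishing of the horizontal part of $\mathcal{A}$, i.e. to $g(\mathcal{A}(X,Y),Z)=0$ for all $X,Y,Z\in\mathcal{H}(M)$.

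The bridge between $\mathcal{A}$ and $N_\varphi$ is the contact-metric identity $2g((\nabla_X\varphi)Y,Z)=g(N_\varphi(Y,Z),\varphi X)$, valid for horizontal $X,Y,Z$ as a consequence of $\Phi=d\eta$ (so $d\Phi=0$). Feeding this into $\mathcal{A}$ and using $\varphi^2X=-X$ on $\mathcal{H}(M)$ gives
\[
2\,g(\mathcal{A}(X,Y),Z)=g(N_\varphi(Y,Z),\varphi X)+g(N_\varphi(\varphi Y,Z),X).
\]
The forward implication is then immediate: if $(\mathcal{H}(M),J)$ is integrable then $n\equiv 0$, both terms on the right are inner products of $N_\varphi$ with horizontal vectors and hence vanish, so \eqref{E:6.3} follows. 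For the converse, \eqref{E:6.3} forces the right-hand side to vanish for every horizontal $X$; since horizontal test vectors only see the horizontal part $n$, this reads $-g(\varphi\,n(Y,Z),X)+g(n(\varphi Y,Z),X)=0$, and nondegeneracy of $L_\theta=g_{|\mathcal{H}(M)}$ yields $n(\varphi Y,Z)=\varphi\,n(Y,Z)$. On the other hand, expanding $N_\varphi(\varphi Y,Z)$ and $\varphi N_\varphi(Y,Z)$ and using $\varphi^2Y=-Y$, $\varphi^3=-\varphi$ produces the universal relation $n(\varphi Y,Z)=-\varphi\,n(Y,Z)$. The two together give $\varphi\,n(Y,Z)=0$, whence $n(Y,Z)=0$ since $\varphi$ is injective on $\mathcal{H}(M)$; thus $N_\varphi$ is horizontally trivial and $(\mathcal{H}(M),J)$ is integrable.

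The hard part will be the third step: establishing the identity $2g((\nabla_X\varphi)Y,Z)=g(N_\varphi(Y,Z),\varphi X)$ on $\mathcal{H}(M)$ together with its companion $n(\varphi Y,Z)=-\varphi\,n(Y,Z)$. Both demand careful bookkeeping of the $\xi$-terms $\eta([\cdot,\cdot])$ thrown off by $\varphi^2=-I+\eta\otimes\xi$, and genuine use of the contact condition $d\eta=\Phi$ with $L_\theta$ Hermitian; it is precisely here that the pseudo-metric signature enters only through $\varepsilon$ and does not obstruct the argument. Everything else is routine: the reduction of integrability to \eqref{E:6.2}, the computation $\eta(\mathcal{A})=\alpha$, and the closing linear-algebra step that plays the two Nijenhuis relations against each other via nondegeneracy of the Levi form.
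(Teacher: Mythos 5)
Your proposal is correct, but it follows a genuinely different route from the paper's. The paper never touches the Nijenhuis tensor or the contact condition $d\eta=\Phi$: from \eqref{E:6.2} and torsion-freeness it derives the symmetry \eqref{E:6.4} of $\nabla J$, packages the horizontal information into the trilinear form $A(X,Y,Z)=g((\nabla_{JX}J)JY-(\nabla_XJ)Y,Z)$ on $\mathcal{H}(M)$, observes that $A$ is symmetric in $(X,Y)$ (by \eqref{E:6.4}) and skew in $(Y,Z)$ (by metric compatibility and skew-adjointness of $J$ and $\nabla J$), kills $A$ by the standard six-term cycling argument, and only then identifies the leftover vertical component with $\alpha(X,Y)\xi$. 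You instead pass through the correspondence with contact pseudo-metric structures and rest everything on the Blair-type identity $2g((\nabla_X\varphi)Y,Z)=g(N_\varphi(Y,Z),\varphi X)$ for horizontal arguments; this is precisely where $d\eta=\Phi$ enters, and the identity is indeed available in the pseudo-metric setting (it follows from the generalized Blair formula of Calvaruso and Perrone \cite{CalPer1}), so your deferred ``hard part'' is a known lemma rather than a gap --- and both your universal relation $n(\varphi Y,Z)=-\varphi\, n(Y,Z)$ and the closing linear algebra check out. The trade-offs: the paper's argument is self-contained and its forward direction works for any almost contact pseudo-metric structure, whereas yours needs the nondegenerate (hence contact) case; in exchange, your forward implication is immediate, and your converse is actually \emph{sounder} than the paper's. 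Your own computation that the $\xi$-component of the left-hand side of \eqref{E:6.3} is identically $\alpha(X,Y)$ shows that the paper's converse step --- ``projecting \eqref{E:6.3} onto $\xi$ yields the symmetry of $\alpha$'' --- is a tautology and proves nothing, while your appeal to nondegeneracy (the Levi form is Hermitian by definition, hence \eqref{E:6.1} holds and $\alpha$ is symmetric, which is what converts \eqref{E:6.3} into \eqref{E:6.4} and then into \eqref{E:6.2}) supplies exactly the ingredient the paper's proof is missing.
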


\begin{proof}
Applying $J$ to \eqref{E:6.2} gives
\begin{equation*}
(\nabla_YJ)X-(\nabla_XJ)Y=J(\nabla_{JX}J)Y-J(\nabla_{JY}J)X
\end{equation*}
for all $X,Y\in\mathcal{H}(M)$. Since
$J(\nabla_{JX}J)Y=-(\nabla_{JX}J)JY$, the above equation becomes
\begin{equation}\label{E:6.4}
(\nabla_YJ)X-(\nabla_XJ)Y=(\nabla_{JY}J)JX-(\nabla_{JX}J)JY
\end{equation}
for all $X,Y\in\mathcal{H}(M)$. If we define a (0,3)-tensor field
$A$ on $\mathcal{H}(M)$ as
\begin{equation}\label{E:6.5}
A(X,Y,Z)=g((\nabla_{JX}J)JY-(\nabla_XJ)Y,Z)
\end{equation}
for all $X,Y\in\mathcal{H}(M)$, then from \eqref{E:6.4} one obtain
\begin{equation}\label{E:6.6}
A(X,Y,Z)=A(Y,X,Z).
\end{equation}
Next, a simple computation shows that
\begin{align*}
A&(X,Y,Z)+A(X,Z,Y)\\
&=g((\nabla_{JX}J)JY-(\nabla_XJ)Y,Z)+g((\nabla_{JX}J)JZ -
(\nabla_XJ)Z,Y) \\
&=-g((\nabla_{JX}J)Z,JY)+g((\nabla_{JX}J)JZ,Y)\\
&=-g(\nabla_{JX}JZ,JY)-g((\nabla_{JX}Z),J^2Y) \\
&\quad+g(\nabla_{JX}J^2Z,Y) - g(J(\nabla_{JX}JZ),Y)\\
&=0,
\end{align*}
where the skew-symmetry of $J$ and $\nabla J$ are used. This
together with \eqref{E:6.6} gives the following:
\begin{align*}
A(X,Y,Z)&=-A(X,Z,Y)=-A(Z,X,Y)=A(Z,Y,X)\\
&=A(Y,Z,X)=-A(Y,X,Z)=-A(X,Y,Z).
\end{align*}
Hence it follows that $A=0$, and so \eqref{E:6.5} implies
\begin{equation}\label{E:6.7}
(\nabla_{JX}J)JY-(\nabla_XJ)Y=\gamma(X,Y)\xi
\end{equation}
for all $X,Y\in\mathcal{H}(M)$, for certain (0,2)-tensor field
$\gamma$ on $\mathcal{H}(M)$. It remains to show that
$\gamma=\alpha$. From \eqref{E:6.7}, it follows that
\begin{align*}
\gamma(X,Y)&=\varepsilon g((\nabla_{JX}J)JY-(\nabla_XJ)Y,\xi)\\
&=\varepsilon\{- g((\nabla_{JX}J)\xi,JY)+g((\nabla_XJ)\xi,Y)\}\\
&=\varepsilon \{ g(\nabla_{JX}\xi,Y)-g(J\nabla_X\xi,Y)\}\\
&=(\nabla_{JX}\theta)Y+(\nabla_X\theta)JY\\
&=\alpha(X,Y).
\end{align*}
Conversely, suppose that \eqref{E:6.3} holds true. Then projecting
\eqref{E:6.3} onto $\xi$, it follows that $\alpha$ is symmetric and
is equivalent to \eqref{E:6.1}. The symmetry of $\alpha$ together
with \eqref{E:6.3} gives \eqref{E:6.4}, which yields
\begin{equation*}
-[JX,Y]-[X,JY])=J[JX,JY]-J[X,Y],
\end{equation*}
for all $X,Y\in\mathcal{H}(M)$, and so satisfies the equation
\eqref{E:6.2}.
\end{proof}

Let $(M, \varphi,\xi,\eta,g)$ be an almost contact pseudo-metric
manifold with $(\mathcal{H}(M), J)$ as the corresponding almost
$C\!R$ structure. For $Y\in TM$, we denote $Y_{\arrowvert
\mathcal{H}(M)}$ to the orthogonal projection on $\mathcal{H}(M)$.
Then, the Bott partial connection $\breve{\nabla}$ on
$\mathcal{H}(M)$ (along $\xi$) is the map
$\breve{\nabla}:S(\xi)\times \mathcal{H}(M)\to \mathcal{H}(M)$
defined by
\begin{equation*}
\breve{\nabla}_\xi X := (\pounds_\xi X)_{\arrowvert
\mathcal{H}(M)}=[\xi, X]_{\arrowvert \mathcal{H}(M)}
\end{equation*}
for any $X\in \mathcal{H}(M)$ (see, \cite[p. 18]{Rov}), where $S(\xi)$
is the 1-dimensional linear subspace of $TM$ generated by $\xi$.

\begin{thm}
Let $(M, \varphi,\xi,\eta,g)$ be an almost contact pseudo-metric
manifold, and $\xi$ a geodesic vector field. Then $h=0$ if and only
if $\breve{\nabla}_\xi J=0$.
\end{thm}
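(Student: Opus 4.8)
The plan is to compute $\breve{\nabla}_\xi J$ explicitly and recognize it, up to a term killed by the geodesic hypothesis, as twice the operator $h$. Since $J=\varphi_{\arrowvert\mathcal{H}(M)}$ and the Bott partial connection acts on sections of $\mathcal{H}(M)$ by $\breve{\nabla}_\xi X=[\xi,X]_{\arrowvert\mathcal{H}(M)}$, I would first treat $J$ as an endomorphism of $\mathcal{H}(M)$ and use the tensorial (Leibniz) rule to write, for any $X\in\mathcal{H}(M)$,
\[
(\breve{\nabla}_\xi J)X=\breve{\nabla}_\xi(JX)-J(\breve{\nabla}_\xi X)=[\xi,\varphi X]_{\arrowvert\mathcal{H}(M)}-\varphi\big([\xi,X]_{\arrowvert\mathcal{H}(M)}\big),
\]
which is legitimate because $\varphi X\in\mathcal{H}(M)$ and $J=\varphi$ on $\mathcal{H}(M)$.

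Next I would unwind the projections using the orthogonal decomposition $Y_{\arrowvert\mathcal{H}(M)}=Y-\eta(Y)\xi$, which follows from $\eta(Y)=\varepsilon g(Y,\xi)$ and $g(\xi,\xi)=\varepsilon$. Because $\varphi\xi=0$, the term $\varphi\big([\xi,X]_{\arrowvert\mathcal{H}(M)}\big)$ equals $\varphi[\xi,X]$, while $[\xi,\varphi X]_{\arrowvert\mathcal{H}(M)}=[\xi,\varphi X]-\eta([\xi,\varphi X])\xi$. Substituting and recognizing $[\xi,\varphi X]-\varphi[\xi,X]=(\pounds_\xi\varphi)X=2hX$, I obtain the key intermediate identity
\[
(\breve{\nabla}_\xi J)X=2hX-\eta([\xi,\varphi X])\xi .
\]
Thus everything reduces to showing that the $\xi$-component $\eta([\xi,\varphi X])$ vanishes.

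The main obstacle — and the only place the geodesic hypothesis is used — is precisely this vanishing. I would write $\eta([\xi,\varphi X])=\varepsilon\,g(\nabla_\xi\varphi X-\nabla_{\varphi X}\xi,\xi)$. The second summand is zero since $g(\xi,\xi)=\varepsilon$ is constant, forcing $g(\nabla_{\varphi X}\xi,\xi)=0$; for the first, I would expand $g(\nabla_\xi\varphi X,\xi)=\xi\big(g(\varphi X,\xi)\big)-g(\varphi X,\nabla_\xi\xi)$, where $g(\varphi X,\xi)=\varepsilon\eta(\varphi X)=0$ identically kills the first part, and the geodesic condition $\nabla_\xi\xi=0$ kills the remaining term $g(\varphi X,\nabla_\xi\xi)$. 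Hence $\eta([\xi,\varphi X])=0$ and $(\breve{\nabla}_\xi J)X=2hX$ for every $X\in\mathcal{H}(M)$. Finally, combining this with $h\xi=0$ (which holds for any almost contact pseudo-metric structure, since $\varphi\xi=0$ and $[\xi,\xi]=0$ give $h\xi=\tfrac12(\pounds_\xi\varphi)\xi=0$), the operator $h$ is determined by its restriction to $\mathcal{H}(M)$, and therefore $\breve{\nabla}_\xi J=0$ if and only if $h=0$, completing the proof.
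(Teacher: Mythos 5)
Your proposal is correct and follows essentially the same route as the paper: both arguments reduce the statement to the identity $(\breve{\nabla}_\xi J)X=2hX$ on $\mathcal{H}(M)$, using torsion-freeness, $g(\varphi X,\xi)=0$, and the geodesic condition $\nabla_\xi\xi=0$ to show that the relevant Lie brackets have no $\xi$-component, together with $h\xi=0$. The only (cosmetic) difference is that the paper first proves $[\xi,X]\in\mathcal{H}(M)$ for all $X\in\mathcal{H}(M)$ so the projections disappear at once, whereas you carry the projection term $-\eta([\xi,\varphi X])\xi$ explicitly and then kill it by the same computation.
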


\begin{proof}
As $h\xi=0$, we may observe that, $h=0$ if and only if $hX=0$ for
any $X\in \mathcal{H}(M)$.

Now using $\nabla_\xi \xi=0$, for any $X\in \mathcal{H}(M)$, we have
\begin{equation*}
\eta([\xi,X])=\varepsilon g(\xi, \nabla_\xi X-\nabla_X \xi)=0,
\end{equation*}
which means $\pounds_\xi X \in \mathcal{H}(M)$. Thus, we get
\begin{align*}
2hX&=\pounds_\xi (\varphi X)-\varphi(\pounds_\xi
X)=\breve{\nabla}_\xi (\varphi X)- \varphi (\breve{\nabla}_\xi X) \\
&=\breve{\nabla}_\xi(JX)-J(\breve{\nabla}_\xi X)=(\breve{\nabla}_\xi
J)X
\end{align*}
for any $X\in \mathcal{H}(M)$, completing the proof.
\end{proof}

For contact pseudo-metric manifold, the structure vector field is
geodesic. So we have the following:

\begin{cor}
A contact pseudo-metric manifold is $K$-contact if and only if
$\breve{\nabla}_\xi J=0$.
\end{cor}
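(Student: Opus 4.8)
The plan is to obtain this corollary as an immediate specialization of the preceding theorem, whose only hypothesis beyond the almost contact pseudo-metric structure is that $\xi$ be a geodesic vector field. So the single point I would need to establish is that this geodesic condition is automatic as soon as the structure is contact, and then the corollary reduces to unwinding the definition of $K$-contact.

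To verify that $\xi$ is geodesic, I would appeal to the structure formula \eqref{E:0:2.3}, namely $\nabla_X\xi=-\varepsilon\varphi X-\varphi hX$. Setting $X=\xi$ and using $\varphi\xi=0$ from \eqref{E:e1} together with $h\xi=0$ (recorded among the basic identities of Section~\ref{S:02}), one finds $\nabla_\xi\xi=-\varepsilon\varphi\xi-\varphi h\xi=0$. Hence $\xi$ is a geodesic vector field on any contact pseudo-metric manifold, exactly as anticipated in the remark preceding the corollary.

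With the geodesic property in hand, the preceding theorem applies without modification and gives $h=0$ if and only if $\breve{\nabla}_\xi J=0$. Since by definition $M$ is $K$-contact pseudo-metric precisely when $h=0$, the chain $M \text{ is } K\text{-contact}\Leftrightarrow h=0\Leftrightarrow\breve{\nabla}_\xi J=0$ yields the claim at once.

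There is essentially no obstacle here: the substance lies entirely in the preceding theorem, and the corollary amounts to checking that its lone extra hypothesis is freely available in the contact pseudo-metric setting. The only care required is the elementary bookkeeping of the sign $\varepsilon$ and the vanishing $h\xi=0$, both of which are routine, so I would expect the write-up to be a single short paragraph.
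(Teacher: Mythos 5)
Your proposal is correct and follows exactly the paper's route: the corollary is obtained by specializing the preceding theorem, noting that on a contact pseudo-metric manifold $\xi$ is geodesic (which you rightly verify from \eqref{E:0:2.3} together with $\varphi\xi=0$ and $h\xi=0$) and that $K$-contact means $h=0$. The paper states the geodesic property without the one-line computation, so your write-up is if anything slightly more explicit.
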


\begin{thm}
A contact pseudo-metric manifold $(M, \varphi,\xi,\eta,g)$ is
Sasakian if and only if the corresponding nondegenerate almost
$C\!R$ structure $(\mathcal{H}(M), J)$ is integrable and
$\breve{\nabla}_\xi J=0$.
\end{thm}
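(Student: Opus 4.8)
The plan is to prove both implications by reducing the $C\!R$-theoretic hypotheses to the analytic characterizations already established: the integrability criterion \eqref{E:6.3}, the Corollary characterizing $K$-contact manifolds by $\breve{\nabla}_\xi J=0$, the Sasakian equation \eqref{E:00:2.8}, and Lemma~\ref{L:0:2.1}. For the forward implication, suppose $M$ is Sasakian. Then $M$ is $K$-contact, so $h=0$, and the preceding Corollary gives $\breve{\nabla}_\xi J=0$ immediately. To see that $(\mathcal{H}(M),J)$ is integrable I would verify \eqref{E:6.3}. For $X,Y\in\mathcal{H}(M)$ we have $\eta(X)=\eta(Y)=0$, so \eqref{E:00:2.8} yields $(\nabla_X\varphi)Y=g(X,Y)\xi$ and $(\nabla_{\varphi X}\varphi)\varphi Y=g(\varphi X,\varphi Y)\xi=g(X,Y)\xi$; hence the left-hand side of \eqref{E:6.3} vanishes. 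Since $h=0$ forces $\nabla_X\xi=-\varepsilon\varphi X$ by \eqref{E:0:2.3}, a short computation gives $\alpha(X,Y)=g(\varphi Y,\varphi X)+g(Y,\varphi^2 X)=0$, so the right-hand side vanishes too and \eqref{E:6.3} holds. Thus the structure is integrable.

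For the converse, $\breve{\nabla}_\xi J=0$ gives (Corollary) that $M$ is $K$-contact, hence $h=0$, $\nabla_X\xi=-\varepsilon\varphi X$, and $\nabla_\xi\varphi=0$; as above $\alpha$ vanishes on $\mathcal{H}(M)$, so integrability together with \eqref{E:6.3} reduces to $(\nabla_X\varphi)Y=(\nabla_{\varphi X}\varphi)\varphi Y$ for all $X,Y\in\mathcal{H}(M)$. I would then show this forces $(\nabla_X\varphi)Y=g(X,Y)\xi$ on $\mathcal{H}(M)$. The $\xi$-component is automatic: using $\nabla_X\xi=-\varepsilon\varphi X$ one finds $g((\nabla_X\varphi)Y,\xi)=\varepsilon g(X,Y)$, so the component of $(\nabla_X\varphi)Y$ along $\xi$ is exactly $g(X,Y)\xi$. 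Everything therefore comes down to the tangential part, which I encode in the trilinear form $T(X,Y,Z):=g((\nabla_X\varphi)Y,Z)$ for $X,Y,Z\in\mathcal{H}(M)$, and the goal is $T\equiv 0$.

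Four relations are available for $T$. Skew-adjointness of $\nabla_X\varphi$ gives $T(X,Y,Z)=-T(X,Z,Y)$; the integrability relation gives $T(X,Y,Z)=T(\varphi X,\varphi Y,Z)$, whence $T(\varphi X,Y,Z)=-T(X,\varphi Y,Z)$; differentiating $\varphi^2=-I+\eta\otimes\xi$ yields $T(X,\varphi Y,Z)=T(X,Y,\varphi Z)$; and $d\Phi=d\,d\eta=0$ gives the cyclic identity $T(X,Z,Y)+T(Y,X,Z)+T(Z,Y,X)=0$. Decomposing $\mathcal{H}(M)^{\mathbb{C}}=\mathcal{H}^{1,0}\oplus\mathcal{H}^{0,1}$ into the $\pm i$-eigenspaces of $\varphi$, the relation $T(\varphi X,Y,Z)=-T(X,\varphi Y,Z)$ forces $T(X,Y,Z)=0$ unless $X$ and $Y$ are of opposite type, and combining it with $T(X,\varphi Y,Z)=T(X,Y,\varphi Z)$ forces $X$ and $Z$ to be of opposite type as well; feeding such arguments into the cyclic identity kills the one surviving component, so $T\equiv 0$. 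Consequently $(\nabla_X\varphi)Y=g(X,Y)\xi$ for $X,Y\in\mathcal{H}(M)$; together with the cases where one argument is $\xi$ (which hold automatically once $h=0$), this is precisely \eqref{E:00:2.8}, so $M$ is Sasakian. Alternatively, since $R(X,Y)\xi=\varepsilon\{(\nabla_Y\varphi)X-(\nabla_X\varphi)Y\}$ on a $K$-contact manifold, $T\equiv 0$ gives \eqref{E:02.4} and Lemma~\ref{L:0:2.1} applies.

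The main obstacle is exactly the tangential vanishing $T\equiv 0$ in the converse: the two reductions to $K$-contact, the identification of the $\xi$-components, and the computation of $\alpha$ are all routine, but establishing $T\equiv 0$ is where the genuine content lies, since it requires the simultaneous use of integrability, the closedness $d\Phi=0$, and the eigenspace splitting induced by $\varphi$. One should take care that each of the four relations is valid for the pseudo-Riemannian metric (the signs produced by $\varepsilon$ and by $g(\varphi X,\varphi Y)=g(X,Y)$ on $\mathcal{H}(M)$ enter the $\xi$-component and the $\alpha$ computation), but these do not affect the eigenspace argument, which is purely formal.
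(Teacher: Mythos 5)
Your proof is correct, but it takes a genuinely different route from the paper's. The paper settles the theorem in a few lines by outsourcing the hard step to Tanno's computation: it asserts (citing \cite{Tanno}, ``following the same proof'' as in the Riemannian case) that integrability of $(\mathcal{H}(M),J)$ is equivalent to the single tensor equation \eqref{E:6.9},
\begin{equation*}
(\nabla_X \varphi)Y=g(X+\varepsilon hX,Y)\xi-\varepsilon \eta(Y)(X+\varepsilon hX),
\end{equation*}
so that, since $\breve{\nabla}_\xi J=0$ is equivalent to $h=0$ (the preceding Corollary, which you also use), \eqref{E:6.9} with $h=0$ is literally the Sasakian condition \eqref{E:00:2.8} and both implications drop out at once. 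You avoid that citation entirely: you take the paper's own Theorem on \eqref{E:6.3} as the integrability criterion, note that $\alpha$ vanishes once $h=0$, observe that the $\xi$-component of $(\nabla_X\varphi)Y$ is automatically $g(X,Y)\xi$, and then annihilate the tangential part $T(X,Y,Z)=g((\nabla_X\varphi)Y,Z)$ by playing four identities (skew-adjointness of $\nabla\varphi$, the integrability relation, the derivative of $\varphi^2=-I+\eta\otimes\xi$, and $d\Phi=0$) against the $(1,0)/(0,1)$ eigenspace decomposition of $\varphi$ on $\mathcal{H}(M)^{\mathbb{C}}$. I checked the type bookkeeping: your relations force the first two arguments of a nonzero component to have opposite type and the last two to have equal type, and the cyclic identity from $d\Phi=0$ then kills the one surviving component, so $T\equiv 0$ as claimed; this is in effect a self-contained proof of the $h=0$ case of the Tanno-type equivalence that the paper leaves to the reference. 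What the paper's route buys is brevity and the stronger intermediate statement \eqref{E:6.9}, valid for arbitrary $h$; what yours buys is a proof contained entirely within the paper's own apparatus, the only outside inputs being the standard fact $\nabla_\xi\varphi=0$ on contact pseudo-metric manifolds (needed for the $X=\xi$ case of \eqref{E:00:2.8}) and, in your alternative ending, Lemma~\ref{L:0:2.1}.
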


\begin{proof}
First we observe that, following the same proof given in
\cite{Tanno} for the Riemannian case, the integrable condition (that
is, \eqref{E:6.1} and \eqref{E:6.2}) of the corresponding $C\!R$
structure $(\mathcal{H}(M), J)$ is equivalent to
\begin{equation}\label{E:6.8}
(\nabla_X \varphi)Y=-\{(\nabla_X\eta)\varphi
Y\}\xi-\eta(X)\varphi(\nabla_X \xi),
\end{equation}
where
\begin{equation*}
(\nabla_X\eta)\varphi Y=-g(X,Y)+\varepsilon
\eta(X)\eta(Y)-\varepsilon g(hX,Y),
\end{equation*}
and
\begin{equation*}
\varphi(\nabla_X \xi)=\varepsilon X-\varepsilon \eta(X)\xi+hX.
\end{equation*}
Thus, \eqref{E:6.8} becomes
\begin{equation}\label{E:6.9}
(\nabla_X \varphi)Y=g(X+\varepsilon hX,Y)\xi-\varepsilon
\eta(Y)(X+\varepsilon hX).
\end{equation}
If the contact pseudo-metric manifold $(M, \varphi,\xi,\eta,g)$ is
Sasakian, then \eqref{E:6.9} satisfies with $h=0$, and so
corresponding nondegenerate almost $C\!R$ structure
$(\mathcal{H}(M), J)$ is integrable and $\breve{\nabla}_\xi J=0$.

Conversely, as $\breve{\nabla}_\xi J=0$ implies $h=0$, equation
\eqref{E:6.9} reduces to \eqref{E:00:2.8}, and so the structure is
Sasakian.
\end{proof}

%------------------------------------------------------------------------
%------------------------------------------------------------------------

\subsection*{Acknowledgment}

The second author is grateful to University Grants Commission, New
Delhi (Ref. No.:20/12/2015(ii)EU-V) for financial support in the
form of Junior Research Fellowship.

% ---------------------------------------------

\begin{thebibliography}{99}
\bibitem{AP1} Alegre, P.:
\textit{Semi-invariant submanifolds of Lorentzian Sasakian
manifolds.} Demonstr. Math. \textbf{44}(2), 391--406 (2011)

\bibitem{AP2} Alegre, P.:
\textit{Slant submanifolds of Lorentzian Saakian and Para Sasakian
manifolds.} Taiwan J. Math. \textbf{17}(3), 897--910 (2013)

\bibitem{AalCar} Alegre, P., Carriazo, A.:
\textit{Semi-Riemannian generalized Sasakian-space-forms.} Bull.
Malays. Math. Sci. Soc. (2015).
https://doi.org/10.1007/s40840-015-0215-0

\bibitem{BejDug} Bejancu, A., Duggal, K.L.:
\textit{Real hypersurfaces of indefinite Kaehler manifolds.} Int. J.
Math. Math. Sci. \textbf{16}, 545--556 (1993)

\bibitem{BDE2} Blair, D.E.:
Riemannian Geometry of Contact and Symplectic Manifolds. Progress in
Mathematics, vol. \textbf{203}, Birkh\"{a}user, Boston (2010)

\bibitem{Calv} Calvaruso, G.:
\textit{Contact Lorentzian manifolds.} Differ. Geom.
Appl. \textbf{29}, 541--551 (2011)

\bibitem{CalPer1} Calvaruso, G., Perrone, D.:
\textit{Contact pseudo-metric manifolds.} Differ. Geom.
Appl. \textbf{28}(2), 615--634 (2010)

\bibitem{CalPer2} Calvaruso, G., Perrone, D.:
\textit{Erratum to: ``Contact pseudo-metric manifolds [Differential
Geom. Appl. 28 (2010), 615--634]".} Differ. Geom. Appl.
\textbf{31}(6), 836--837 (2013)

\bibitem{CalPer3} Calvaruso, G., Perrone, D.:
\textit{H-contact semi-Riemannian manifolds.} J. Geom. Phys.
\textbf{71}, 11--21 (2013)

\bibitem{CarrPer} Carriazo, A., P\'{e}rez-Garc\'{i}a, M.J.:
\textit{Slant submanifolds in neutral almost contact pseudo-metric
manifolds.} Differ. Geom. Appl  \textbf{54}, 71--80 (2017)

\bibitem{Drag} Dragomir, S., Tomassini, G.:
Differential geometry and analysis on $C\!R$ manifolds. Progress in
Mathematics, vol. \textbf{246}, Birkh\"{a}user, Basel

\bibitem{Dug} Duggal, K.L.:
\textit{Space time manifolds and contact structures.} Int. J. Math.
Math. Sci. \textbf{13}, 545--554 (1990)

\bibitem{BOn}  O'Neill, B.:
Semi-Riemannian Geometry with Applications to Relativity. Academic
Press, New York, (1983)

\bibitem{Per1} Perrone, D.:
\textit{Curvature of $K$-contact semi-Riemannian manifolds.} Canad.
Math. Bull. \textbf{57}(2), 401--412 (2014)

\bibitem{Per2} Perrone, D.:
\textit{Contact pseudo-metric manifolds of constant curvature and
$C\!R$ geometry.} Results. Math. \textbf{66}, 213-225 (2014)

\bibitem{Per3} Perrone, D.:
\textit{Remarks on Levi harmonicity of contact semi-Riemannian
manifolds.} J. Korean Math. Soc. \textbf{51}(5), 881--895 (2014)

\bibitem{Per4} Perrone, D.:
\textit{On the standard nondegenerate almost $C\!R$ structure of
tangent hyperquadric bundles.} Geom. Dedicata \textbf{185}, 15--33
(2016)

\bibitem{Rov} Rovenskii, V.:
Foliation on Riemannian manifolds and submanifolds. Birkh\"{a}user,
Boston (1998)

\bibitem{RS} Sharma, R.:
\textit{Notes on contact metric manifolds.} Ulam Quarterly.
\textbf{3}(1), 27--33 (1995)

\bibitem{TT} Takahashi, T.:
\textit{Sasakian manifold with pseudo-Riemannian metrics.}
T\^{o}hoku Math. J. \textbf{21}, 271--290 (1969)

\bibitem{Tanno} Tanno, S.:
\textit{Variational problems on contact Riemannian manifolds.}
Trans. Am. Math. Soc. \textbf{314}, 349--379 (1989)

\bibitem{Wang} Wang, Y., Liu, X.:
\textit{Almost Kenmotsu pseudo-metric manifolds.} An. Stiint¸. Univ.
Al. I. Cuza Iasi. Mat. (N.S.) (2014)
https://doi.org/10.2478/aicu-2014-0030
\end{thebibliography}
\end{document}